\newcommand{\be} {\begin{eqnarray}}
\newcommand{\ee} {\end{eqnarray}}
\newcommand{\bep} {\begin{eqnarray*}}
\newcommand{\eep} {\end{eqnarray*}}
\newcommand {\Hol}{\mathop{\rm Hol}\nolimits}
\newcommand {\Id}{\mathop{\rm Id}\nolimits}
\renewcommand {\Re}{\mathop{\rm Re}\nolimits}
\newcommand {\JJ}{\mathcal{J}}
\newcommand {\A}{\mathcal{A}}
\newcommand {\BB}{\mathcal{B}}
\newcommand{\R}{{\mathbb R}}
\newcommand{\C}{{\mathbb C}}
\newcommand {\D}{\mathbb{D}}
\newtheorem{remar}{Remark}[section]
\newtheorem{examp}{Example}[section]
\newtheorem{defin}{Definition}[section]
\newtheorem{corol}{Corollary}[section]
\newtheorem{propo}{Proposition}[section]
\newtheorem{theorem}{Theorem}[section]
\newtheorem{lemma}{Lemma}[section]
\newtheorem{conj}{Conjecture}
\newtheorem{quest}{Question}
\newcommand{\rema}{\begin{remar}\rm}
\newcommand{\erema}{$\blacktriangleright$\end{remar}}
\newcommand{\exa}{\begin{examp}\rm}
\newcommand{\eexa}{$\blacktriangleright$\end{examp}}
\def\lwvec(#1 #2){\linewd 0.1
           \lvec(#1 #2)
           \linewd 0.05}
\begin{document}

\title[Nonlinear resolvents]{Nonlinear resolvents in the unit disk: geometry and dynamics}

\author[M. Elin]{Mark Elin}

\address{Department of Mathematics,
         Ort Braude College,
         Karmiel 21982,
         Israel}

\email{mark$\_$elin@braude.ac.il}

\author[F. Jacobzon]{Fiana Jacobzon}

\address{Department of Mathematics,
         Ort Braude College,
         Karmiel 21982,
         Israel}

\email{fiana@braude.ac.il}

\keywords{nonlinear resolvent, semigroup generator, distortion theorem, order of starlikeness, quasiconformal extension}

\begin{abstract}
In this paper we present a unified approach to the study of geometric and dynamic properties of nonlinear resolvents of holomorphic generators.

The idea is to apply the distortion theorem we have established. This method allows us to find order of spirallikeness and of strong starlikeness
of resolvents and remove all the restrictions for resolvents to admit quasiconformal extension to the complex plane $\C$.

In addition, we use this method to establish the uniform convergence of the  resolvent family on the whole unit disk  and obtain some characteristics of semigroups generated by these resolvents.

\medskip

 {\footnotesize 2020 Mathematics Subject Classification: Primary 30C45, 30D05; Secondary 30C62, 37F44, 47H20}

\end{abstract}
\maketitle

\section{Introduction}\label{sect-intro}

This paper primarily focuses on nonlinear resolvents, which play a fundamental role in the theory of semigroups of holomorphic mappings.
These semigroups are a natural generalization of semigroups of linear operators. In the one-dimensional case, the work of Berkson and Porta in \cite{B-P}
was a breakthrough in the theory of one-parameter semigroups of holomorphic self-mappings of the open unit disk\footnote{Throughout the paper the term “semigroup” refers to one-parameter semigroups of holomorphic self-maps of the unit disk.}. They proved that every such semigroup is
differentiable with respect to the semigroup parameter, hence, it is generated. Furthermore, they characterized the structure of semigroup generators.
In the multi-dimensional and infinite-dimensional settings, the study of generation theory began with the works \cite{Ab-92} by Abate  and \cite{R-S-96}
by Reich and Shoikhet. Over the past decades, various characterizations of semigroup generators have been found, one of which is mentioned in Theorem~\ref{teorA}
below. See the monographs \cite{B-C-DM-book, E-R-Sbook, E-S-book, R-S1} and references therein for details.

\subsection{Object of study and questions}

One effective way to observe how certain properties of the generator affect the dynamic behavior of the generated semigroup is to use the so-called
exponential formula (see~\eqref{expo-f} below). This formula involves the so-called nonlinear resolvents $G_r$ of the semigroup generator $f$.
These resolvents are defined by the formula $G_r:=(\Id+rf)^{-1}$, $r>0$, see Theorem~\ref{teorA} below. The net $\{G_r\}_{r>0}$ is called the resolvent family.
Leaving aside their importance in dynamic systems, it is worth noting that nonlinear resolvents form a class of biholomorphic self-mappings of the open unit ball,
see~\cite{E-R-Sbook, R-S-96, R-S1}. Thus, they represent a class of mappings inherently interesting from the point of view of geometric function theory.

Surprisingly, the study of geometric properties of nonlinear resolvents has begun only recently in \cite{E-S-S}, where some important facts about resolvents
in the open unit disk were first established. For bounds on coefficient functionals over the class of nonlinear resolvents, see \cite{EJ-coeff20, EJ-est}.
Naturally, obtaining multi-dimensional analogues of these results is a more complicated problem. To date such generalizations are unknown, although there
are partial ones given in \cite{GHK2020, HKK2021}.

In this paper, we elaborate an approach that enables us to establish both geometric and dynamic properties of resolvents by exploring their interrelation.
We specifically deal with resolvents that are  holomorphic in the open unit disk and vanish at zero (by Theorem~\ref{teorA}, it is equivalent to $f(0)=0$),
while the prospect of removing this restriction will be discussed in Section~\ref{sec-discuss}. There are serious reasons for expectations that a similar
approach for the study of the multi-dimensional case will be developed on this basis.

The presentation of the problems to be studied is opened with those important in geometric function theory.

\begin{quest}
Establish distortion and covering results for nonlinear resolvents depending on the resolvent parameter $r>0$.
\end{quest}
To the best of our knowledge, this issue has not been studied yet.
\begin{quest}
Do resolvents admit a quasiconformal extension to $\C$?
\end{quest}
A quasiconformal extension was established in \cite{E-S-S} under the constraint $\left| \arg\frac{f(z)}z \right| < \frac{\pi\alpha}{2}$
for some $\alpha<1$. In Section~\ref{sec-main} below we establish such an extension without any additional restrictions.

To proceed, we recall that starlike and spirallike functions are classical objects in geometric function theory (see Definition~\ref{def-starlike},
the reader can also be referred to the monograph \cite{G-K} and references therein). It was proved in \cite{E-S-S} that any resolvent is a starlike function
of order at least $\frac12\,.$ Therefore, it is natural to raise the problems:
\begin{quest}
Find the sharp order of starlikeness for non-linear resolvents.
\end{quest}

\begin{quest}
Are resolvents spirallike? If the answer is affirmative, find the sharp order of spirallikeness.
\end{quest}

As for dynamic properties, it is known (see formula~\eqref{D-Wp}) that the net of nonlinear resolvents converges to zero uniformly on compact subsets of the open unit disk.
This gives rise to the following questions:
\begin{quest}
Is this convergence uniform on the whole open unit disk? What about convergence of the net of normalized\footnote{An analytic function $f\in \Hol(\D,\C) $ is said to be normalized if $f(0)=f'(0)-1=0$.}  resolvents?
\end{quest}

To explain the importance of the following two questions, recall that, as it was proved in \cite{E-S-S}, every nonlinear resolvent $G_r$, $r>0$, of a generator $f,\ f(0)= 0, $
itself generates a semigroup. Therefore, it is interesting to examine specific characteristics of semigroups generated by resolvents. We will focus on two important properties.
One of them is the rate of convergence of the semigroup. In this connection, we note that although all semigroups (with the exception groups of elliptic automorphisms) converge
to zero uniformly on compact subsets of the disk, some of them tend to zero exponentially uniformly on the whole open unit disk (see Theorem~\ref{thm_kappa} and a detailed explanation in the next section).

Another semigroup property is the possibility of analytic extension with respect to the semigroup parameter into a complex domain. The conditions that ensure
the analytic extension of semigroups along with estimates of the maximal sector in $\C$ to which this extension is possible, are presented in Theorem~\ref{thm-analyt}.

So, the following questions are relevant:

\begin{quest}
  Estimate the rate of convergence of semigroups generated by nonlinear resolvents. Is the convergence uniform on the unit disk?
\end{quest}

\begin{quest}
  Does the semigroup generated by $G_r,\ r>0,$ admit analytic extension with respect to the semigroup parameter  to a sector in the complex plane?
  If the answer is yes, find the maximal angle of  opening of such a sector.
\end{quest}
It seems that this natural question is studied for the first time.

\subsection{Main results}
 In what follows we assume that $f$ is a generator, with $f(0)=0$ and $q=f'(0)\neq0$.

Our approach is to start with a distortion theorem and then successively establish other results.

\begin{theorem}\label{ass1-uiform-conv}
For every $r>\frac2{\Re q}\,$, the  resolvent $G_r$ extends holomorphically  to the disk $D_{\rho(r)}$ of radius $\rho(r)={\displaystyle \left(\sqrt{2r \Re q}-\sqrt{r \Re q -1}\right)^2>1}$, and satisfies
\[
|G_r(z)| \le \sqrt{\frac{2r\Re q}{r\Re q-1}}  -1 \quad \text{ for all } \quad z\in D_{\rho(r)}.
\]
\end{theorem}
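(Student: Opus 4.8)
I would run everything through the univalent map $h:=\Id+rf$, whose inverse is $G_r$ by definition. Since $f$ is a generator with $f(0)=0$, Theorem~\ref{teorA} lets us write $f(z)=zp(z)$ with $p\in\Hol(\D,\C)$, $\Re p\ge 0$, $p(0)=f'(0)=q$; because $r>2/\Re q$ we have $\Re q>0$ and hence $\Re p>0$ on $\D$. Thus $h(z)=z\bigl(1+rp(z)\bigr)$, $h(0)=0$, $h$ is holomorphic and injective on $\D$, and $G_r=h^{-1}$. Writing $M:=\sqrt{2r\Re q/(r\Re q-1)}-1$ for the bound in the statement, I claim the whole theorem reduces to the single inclusion $D_{\rho(r)}\subseteq h(D_M)$. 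Granting it: $D_{\rho(r)}\subseteq h(D_M)\subseteq h(\D)$, so $G_r=h^{-1}$ is holomorphic on $D_{\rho(r)}$ and (as $\D\subseteq D_{\rho(r)}$) extends the resolvent; and since $h$ is injective, each $z\in D_{\rho(r)}\subseteq h(D_M)$ has its unique $h$-preimage $G_r(z)$ in $D_M$, i.e. $|G_r(z)|<M$.

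\textbf{The key estimate.} Set $a:=r\Re q>2$. First I would note the elementary facts $0<M<1$ and $\rho(r)>1$ (both following from $a>2$), and then carry out the one genuine computation: writing $\phi(s):=s\bigl(1+a\tfrac{1-s}{1+s}\bigr)$ on $[0,1]$, the point $s=M$ is the unique maximum of $\phi$ and $\phi(M)=\rho(r)$. (This explains the constants: $\rho(r)$ is the best lower bound for $|h|$ that the Harnack inequality yields on a single circle about $0$, and $M$ is the radius of that circle.) Now for $|z|=M$, applying Harnack's inequality to the positive harmonic function $\Re p$ gives $\Re p(z)\ge\tfrac{1-|z|}{1+|z|}\Re q$, so
\[
|h(z)|=|z|\,\bigl|1+rp(z)\bigr|\ \ge\ |z|\bigl(1+r\Re p(z)\bigr)\ \ge\ |z|\Bigl(1+a\tfrac{1-|z|}{1+|z|}\Bigr)=\phi(M)=\rho(r).
\]

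\textbf{From the estimate to the inclusion.} Since $M<1$, $h$ is continuous and injective on $\overline{D_M}$, so $\gamma:=h(\partial D_M)$ is a Jordan curve, $h(D_M)$ is the bounded component of $\C\setminus\gamma$, and $h(0)=0\in h(D_M)$. By the key estimate $\gamma\subseteq\{|z|\ge\rho(r)\}$; hence $D_{\rho(r)}$ is connected, contains $0\in h(D_M)$, and is disjoint from $\gamma=\partial h(D_M)$, so $D_{\rho(r)}\subseteq h(D_M)$. This is the required inclusion, and the theorem follows.

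\textbf{Main difficulty.} The proof has no analytic subtlety beyond Harnack; the delicate point is organizational. The Harnack bound $|h(z)|\ge\phi(|z|)$ is sharp only on the circle $|z|=M$, and one should \emph{not} attempt to bound $|G_r|$ by proving $|h|\ge\rho(r)$ throughout the annulus $M<|z|<1$ — that is false (for the extremal generator $f(z)=\Re q\cdot z\frac{1-z}{1+z}$ one has $h(1)=1<\rho(r)$). Instead the argument must go through the inclusion of the two disks together with injectivity of $h$. The computational heart is thus the identity $\phi(M)=\rho(r)$, which pins down the constants; and the same extremal $f$ — for which $G_r$ has a square-root branch point exactly at distance $\rho(r)$ and $|G_r|$ reaches $M$ there — shows that neither constant can be improved.
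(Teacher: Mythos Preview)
Your approach is more direct than the paper's: instead of specializing the abstract inverse-function result of Theorem~\ref{th_posi1} (which in turn quotes \cite{E-S-2020a}), you bound $|h|$ from below on a single circle via Harnack and then invert. The computation $\phi(M)=\rho(r)$ is correct and identifies the optimal radius, and your remarks on sharpness are on point.

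There is, however, a genuine gap: the assertion that $h=\Id+rf$ is injective on all of $\D$ is \emph{false}, and you rely on it to conclude that $h(\partial D_M)$ is a Jordan curve. Your own extremal example already exhibits the failure. For $q=1$, $f(z)=z\,\dfrac{1-z}{1+z}$ and $r=3$ one has $h(z)=z(4-2z)/(1+z)$, whose derivative $h'(z)=(4-4z-2z^2)/(1+z)^2$ vanishes at $z_+=\sqrt{3}-1\in\D$; hence $h$ is not even locally injective there. (Note $|z_+|=M$ and $h(z_+)=8-4\sqrt{3}=\rho(3)$: this is precisely the branch point you mention in the last paragraph.) Theorem~\ref{teorA} guarantees only that each $w\in\D$ has a unique $h$-preimage in $\D$; it says nothing about $w\notin\D$, and here $h(z_+)=\rho(3)>1$.

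The repair is easy and needs nothing beyond your key estimate. Since $|h|\ge\rho(r)$ on $\partial D_M$, for every $w\in D_{\rho(r)}$ the function $h-w$ is zero-free on $\partial D_M$; by Rouch\'e (or the argument principle) the number of its zeros in $D_M$ is independent of $w$, and at $w=0$ there is exactly one, namely $z=0$, because $h(z)=z\bigl(1+rp(z)\bigr)$ with $\Re(1+rp)>1$. Thus $h$ assumes each value of $D_{\rho(r)}$ exactly once in $D_M$, the resulting inverse $G:D_{\rho(r)}\to D_M$ is holomorphic, and $G|_\D=G_r$ by the uniqueness clause of Theorem~\ref{teorA}. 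This replaces the Jordan-curve step and requires no global injectivity of $h$.
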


Among other things, Theorem~\ref{ass1-uiform-conv} allows us to estimate the image of the function $w\mapsto \displaystyle\frac{wG'_r(w)}{G_r(w)}$.
Using this estimate, we derive the orders of starlikeness, of strong starlikeness and of spirallikeness for nonlinear resolvents (see Definition~\ref{def-starlike}).
More precisely, we show that {\it if $r>\frac6{\Re q}\,,$ then the nonlinear resolvent $G_r$ is
\begin{itemize}
  \item starlike of order $\frac{r\Re q}{r\Re q+6}$;
  \item strongly starlike of order $\frac2\pi \arcsin \frac{6}{r\Re q}\,$;
  \item $\theta$-spirallike of order $\frac{(r\Re q)^2\cos \theta-6r\Re q}{(r\Re q)^2 \cos \theta -36 \cos \theta}$ for any $\theta$ with $|\theta| \leq \arccos \frac{6}{r\Re q}$.
\end{itemize}}

Another interesting consequence of Theorem~\ref{ass1-uiform-conv} concerns the convergence of the net of resolvents. As we have already mentioned,
it is well-known that this net converges to zero uniformly on compact subsets of the open unit disk. We will show subsequently that in fact this convergence is uniform on the whole {\it open} unit disk.

In addition, the net of normalized resolvents $\{(1+rq)G_r\}_{r>0}$ converges to the identity mapping  as $r\to \infty$, uniformly on compact subsets of~$\D$.

\subsection{Outline of the paper}
The outline of the paper is as follows.
In the following section, we recall some notions and provide preliminary results. Sections~\ref{sect-cov-dist}--\ref{sec-semig} contain our results.

The covering and distortion theorems are established in Section~\ref{sect-cov-dist}.
Our approach to the aforementioned questions relies on the distortion theorem and is presented in Section~\ref{sec-main}. In particular, we estimate
the order of starlikeness, strong starlikeness and spirallikeness of nonlinear resolvents. We also use these estimates to study the dynamic behavior of resolvent families.
Section~\ref{sec-semig} is devoted to the study of dynamics of semigroups generated by nonlinear resolvents.

In Section~\ref{sec-discuss}, we make some additional comments and formulate open questions motivated by the results obtained in this work.

\bigskip

\section{Preliminaries}\label{sect-prelim}
\setcounter{equation}{0}

Let $D$ be a domain in the complex plane $\C$.  Denote by $\Hol(D,\C)$ the set of holomorphic functions on $D$, and by $\Hol(D) := \Hol(D,D)$
the set of all holomorphic self-mappings of $D$. In what follows, we denote by $D_r$ the open disk of radius $r$, namely, $D_r:=\left\{z:\ |z|<r\right\}$.
Correspondingly, $\D=D_1$ is the open unit disk.

Let $\Omega$ be the subclass of $\Hol(\D)$ consisting of functions vanishing at the origin:
\begin{equation}\label{def-U}
\Omega=\{ \omega \in \Hol(\D):\ \omega(0)=0 \}.
\end{equation}
The identity mapping on $\D$ will be denoted by $\Id$.

To define nonlinear resolvents, the main object of the paper, recall that a mapping $f\in\Hol(\D,\C)$ is called an {\it (infinitesimal) generator} if for every $z\in\D$, the Cauchy problem
\begin{equation}  \label{nS1}
\left\{
\begin{array}{l}
\frac{\partial u(t,z)}{\partial t}+f(u(t,z))=0    ,     \vspace{2mm} \\
u(0,z)=z,%
\end{array}%
\right.
\end{equation}%
has a unique solution $u=u(t,z)\in\D$ defined for all $t\geq 0$. In this case, the unique solution of \eqref{nS1} forms a semigroup generated by $f$; see, for example, \cite{B-C-DM-book, E-R-Sbook, E-S-book, R-S1, SD}. It turns out that generators can be characterized as follows.

 \begin{theorem}[see \cite{R-S1, SD, E-R-Sbook, E-S-book} for details] \label{teorA}
 Let $f\in \Hol(\D , \C),\  f\not\equiv0$. Then $f$ is a generator on $\D$ if and only if it satisfies the so-called range condition:
\[
\left(\Id +rf\right)(\D)\supset\D\qquad    \mbox{for all }\quad r>0,
\]
and $G_r:=(\Id+rf)^{-1}$ is a well-defined self-mapping of $\D$.
 \end{theorem}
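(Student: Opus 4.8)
The statement is a nonlinear counterpart of the Hille--Yosida theorem, so the plan is to prove the two implications separately, each time reducing matters to classical facts about generators on $\D$: (i)~the generators form a convex cone; (ii)~every generator $h\not\equiv 0$ has a unique Denjoy--Wolff point $\tau(h)\in\overline{\D}$, the common attracting point of the generated flow, which is the unique null point of $h$ in $\D$ when $\tau(h)\in\D$ and the point of vanishing angular limit of $h$ when $\tau(h)\in\partial\D$; (iii)~by the flow invariance of $\D$, at every boundary point $\zeta$ at which $h$ has a finite angular limit one has $\Re\bigl[\overline{\zeta}\,h(\zeta)\bigr]\ge 0$. The Cauchy--Lipschitz theorem for holomorphic vector fields and the Schwarz--Pick lemma complete the toolbox.

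\emph{Necessity.} Let $f$ be a generator, fix $r>0$, take $w\in\D$, and set $g_w(z):=f(z)+\tfrac1r(z-w)$. The flow of $z\mapsto\tfrac1r(z-w)$ is $t\mapsto w+(z-w)e^{-t/r}$, which runs along the segment $[z,w]\subset\D$, so that map is a generator; by~(i), $g_w$ is a generator, and $g_w\not\equiv0$ (else $f(z)\equiv-\tfrac1r(z-w)$, whose flow $t\mapsto w+(z-w)e^{t/r}$ leaves $\D$, contradicting that $f$ is a generator). Put $\tau:=\tau(g_w)$. The crucial step is to show $\tau\in\D$. Otherwise $\sigma:=\tau\in\partial\D$ is a boundary null point of $g_w$, so $\angle\lim_{z\to\sigma}g_w(z)=0$ by~(ii); since $g_w(z)-f(z)=\tfrac1r(z-w)$ is continuous up to $\overline{\D}$, this forces $\angle\lim_{z\to\sigma}f(z)=-\tfrac1r(\sigma-w)$, and then~(iii) for $f$ at $\sigma$ gives $0\le\Re\bigl[\overline{\sigma}f(\sigma)\bigr]=-\tfrac1r\bigl(1-\Re(\overline{\sigma}w)\bigr)<0$ (as $|w|<1$) --- a contradiction. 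Hence $\tau\in\D$, so $g_w(\tau)=0$, i.e. $\tau+rf(\tau)=w$; as $w\in\D$ was arbitrary, $(\Id+rf)(\D)\supset\D$. Moreover the $\D$-solution of $z+rf(z)=w$ is unique (two solutions would be interior null points of $g_w$, hence equal), so $G_r:=(\Id+rf)^{-1}$ is a well-defined self-map of $\D$, holomorphic as the single-valued inverse branch over $\D$ of the holomorphic map $\Id+rf$.

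\emph{Sufficiency.} Conversely, assume that $(\Id+rf)(\D)\supset\D$ for every $r>0$ and that each $G_r:=(\Id+rf)^{-1}$ is a well-defined self-map of $\D$ (hence $G_r\in\Hol(\D,\D)$). The plan is to reconstruct the semigroup through the exponential (Crandall--Liggett type) formula
\[
u(t,z):=\lim_{n\to\infty}\ \underbrace{G_{t/n}\circ\cdots\circ G_{t/n}}_{n}(z),\qquad t\ge0,\ z\in\D .
\]
The substance is the existence of this limit, uniformly on compact subsets of $\D$ and locally uniformly in $t$. I would obtain it from the defining relation $G_r(z)=z-rf(G_r(z))$ --- which gives $G_r(z)\to z$ and $\tfrac1r\bigl(G_r(z)-z\bigr)\to-f(z)$ as $r\to0^+$, locally uniformly --- the resolvent identity $G_r=G_s\circ\bigl(\tfrac sr\,\Id+(1-\tfrac sr)G_r\bigr)$ for $0<s\le r$, and the Schwarz--Pick contractivity of each $G_r$, by running the standard Crandall--Liggett estimate. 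Once $u$ exists, $u(0,\cdot)=\Id$ and the semigroup law $u(t+s,\cdot)=u(t,\cdot)\circ u(s,\cdot)$ follow from the construction, each $u(t,\cdot)$ is a holomorphic self-map of $\D$, and differentiating at $t=0$ --- using $G_t(z)=z-tf(z)+o(t)$, which the construction transfers to $u(t,z)=z-tf(z)+o(t)$ --- yields $\tfrac{\partial u}{\partial t}(0,z)=-f(z)$; hence $u$ solves \eqref{nS1}, so $f$ is a generator.

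The step I expect to be the main obstacle is the convergence of the exponential formula in the sufficiency direction: it is the genuine analytic core of the theorem, essentially the nonlinear Hille--Yosida (Crandall--Liggett) theorem. The interiority of $\tau(g_w)$ in the necessity direction is the other delicate point. Full details of both can be found in \cite{R-S1, SD, E-R-Sbook, E-S-book}.
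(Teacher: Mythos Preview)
The paper does not give its own proof of Theorem~\ref{teorA}; it is stated as a background result with the pointer ``see \cite{R-S1, SD, E-R-Sbook, E-S-book} for details'', and is used subsequently without further justification. So there is no in-paper argument to compare your proposal against.

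That said, your sketch follows the standard route taken in those very references (notably the Reich--Shoikhet treatment). For necessity, the device of perturbing $f$ by the linear generator $z\mapsto\frac1r(z-w)$, invoking the cone property, and then locating the Denjoy--Wolff point of $g_w$ in $\D$ via the boundary flow-invariance inequality is exactly the textbook argument; your contradiction computation is correct. For sufficiency, the Crandall--Liggett/exponential-formula construction using the resolvent identity $G_r=G_s\circ\bigl(\tfrac{s}{r}\,\Id+(1-\tfrac{s}{r})G_r\bigr)$ (which you state correctly; note the convex combination lands in $\D$) together with Schwarz--Pick contractivity is again the standard approach, and you rightly flag the convergence of the product formula as the analytic core that requires the full machinery of the cited monographs. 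One small point: the holomorphicity of $G_r$ deserves a word---single-valuedness of the inverse forces $1+rf'\neq0$ on $G_r(\D)$ (a critical point would make $\Id+rf$ locally $k$-to-one), after which the inverse function theorem applies; this is implicit in your phrase ``single-valued inverse branch'' but worth making explicit.
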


The mappings $G_r\in\Hol(\D),\ r>0,$ are called the {\it nonlinear resolvents} of the generator $f$, the net $\{G_r\}_{r>0}$ is the {\it resolvent family} for $f$. These are the main objects of the study in this paper.

Numerous properties of nonlinear resolvents can be found in the books \cite{SD, R-S1, E-R-Sbook}. In particular, the solution of the Cauchy problem~\eqref{nS1} can be reproduced by the following exponential formula:
\begin{equation}\label{expo-f}
u(t,\cdot) = \lim_{n\rightarrow \infty } \left(G_{\frac{t}{n}}\right)^{[n]},
\end{equation}
where $G^{[n]}$ denotes the $n$-th iterate of a self-mapping $G$ and the limit exists in the topology of uniform convergence on compact subsets of $\D.$

It is known that a generator has at most one null point in $\D$. This null point $\tau\in\D$ is known to be the \textit{Denjoy--Wolff point} for the semigroup
$\left\{u(t,\cdot)\right\} _{t\geq 0}$ defined by \eqref{nS1} as well as for the resolvent family $\{G_r\}_{r>0}$. More precisely, if the semigroup does not contain an elliptic automorphism, then
\begin{equation}\label{D-Wp}
\tau= \lim\limits_{t\rightarrow \infty } u(t,z) =\lim_{r\to\infty}G_{r}(z) \quad \mbox{for any}\quad  z\in\D.
\end{equation}
Moreover, the convergence in \eqref{D-Wp} is uniform on compact subsets of the open unit disk.

\vspace{2mm}

As is noted, we concentrate on the  case $\tau=0$. In this case the famous  Berkson--Porta representation formula for infinitesimal generators (see \cite{B-P}) becomes
\begin{equation}\label{B-P-repres}
f(z)=zp(z) \quad \text{with} \quad \Re p(z)\ge 0 \quad (z\in \D)
\end{equation}
and $\lim\limits_{r\to\infty} G_r(z)=0,$ uniformly on compact subsets of $\D$, by~\eqref{D-Wp}. There is a simple verifiable condition for
the convergence of the semigroup to its Denjoy--Wolff point $\tau=0$ to be uniform on the {\it whole disk}~$\D$:

\begin{theorem}[see \cite{FSG, E-S-S, E-R-Sbook}]\label{thm_kappa}
Let $\kappa>0$ be a constant. The semigroup $\{u(t,\cdot)\}_{t\ge0}$ generated by~$f$, $f(z)=zp(z),$ satisfies the estimate $|u(t,z)|\le |z|e^{-\kappa t }$ for all $t>0$ and $z\in\D$ (and consequently $u(t,z)\to0$ as $t\to\infty$ uniformly on $\D$)  if and only if $\Re p(z)\ge\kappa,\ z\in\D$.
\end{theorem}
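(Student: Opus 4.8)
The plan is to avoid resolvents entirely here and argue directly from the Cauchy problem~\eqref{nS1}, using the real-valued auxiliary function
\[
\phi_z(t):=|u(t,z)|^2,\qquad t\ge0,\quad z\in\D,
\]
together with a Gronwall-type differential inequality. The key computation is that, since $u(\cdot,z)$ is of class $C^1$ in $t$ with $\partial_t u(t,z)=-f(u(t,z))=-u(t,z)\,p(u(t,z))$, one has
\[
\phi_z'(t)=2\Re\bigl(\overline{u(t,z)}\,\partial_t u(t,z)\bigr)=-2\,|u(t,z)|^2\,\Re p\bigl(u(t,z)\bigr).
\]
Working with $\phi_z=|u|^2$ rather than with $|u|$ itself sidesteps any non-smoothness of the modulus, and $t\mapsto\Re p(u(t,z))$ is continuous because $p$ is holomorphic and $u(t,z)\in\D$.

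For the sufficiency direction, assume $\Re p(z)\ge\kappa$ for all $z\in\D$. Evaluating the hypothesis at the point $u(t,z)\in\D$ in the displayed formula gives $\phi_z'(t)\le-2\kappa\,\phi_z(t)$, hence $\frac{d}{dt}\bigl(e^{2\kappa t}\phi_z(t)\bigr)\le0$, and therefore $e^{2\kappa t}\phi_z(t)\le\phi_z(0)=|z|^2$. This is precisely $|u(t,z)|\le|z|e^{-\kappa t}$, from which uniform convergence of $u(t,\cdot)$ to $0$ on all of $\D$ is immediate.

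For the necessity direction, assume $|u(t,z)|\le|z|e^{-\kappa t}$ for all $t>0$ and $z\in\D$, and read off information at $t=0$. The function $g(t):=|z|^2e^{-2\kappa t}-\phi_z(t)$ is nonnegative for $t\ge0$ and vanishes at $t=0$, so its (one-sided) derivative at $0$ satisfies $g'(0)\ge0$. Using $u(0,z)=z$ and $\partial_t u(0,z)=-zp(z)$ in the formula for $\phi_z'$, one computes $g'(0)=-2\kappa|z|^2+2|z|^2\Re p(z)$, so $|z|^2\bigl(\Re p(z)-\kappa\bigr)\ge0$; this yields $\Re p(z)\ge\kappa$ for every $z\in\D\setminus\{0\}$, and the case $z=0$ follows by continuity of $\Re p$.

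I do not anticipate a genuine obstacle: the argument is a short Lyapunov/Gronwall estimate, and the only points requiring a little care are (i) using $|u|^2$ so that differentiation is legitimate, (ii) comparing only right-hand derivatives at $t=0$ in the necessity part, and (iii) the harmless endpoint $z=0$. Note that the Berkson--Porta condition $\Re p\ge0$ from~\eqref{B-P-repres} is not needed as an input, since it is automatically subsumed once $\Re p\ge\kappa>0$ is assumed or derived.
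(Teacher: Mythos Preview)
Your argument is correct. Both directions are clean: the sufficiency is the standard Gronwall/Lyapunov estimate, and for necessity your use of the right-hand derivative of $g$ at $t=0$ is the right device, with the endpoint $z=0$ handled by continuity.

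Regarding comparison with the paper: note that the paper does \emph{not} supply its own proof of this theorem. It is stated in the Preliminaries section as a known result, with attribution to \cite{FSG, E-S-S, E-R-Sbook}, and is then used as a black box in the proof of Theorem~\ref{thm-estim}. So there is no ``paper's own proof'' to compare against; your self-contained argument simply fills in what the paper takes for granted from the literature. The approach you give --- differentiating $|u(t,z)|^2$ along trajectories of~\eqref{nS1} --- is essentially the natural one and is in the spirit of how such exponential-squeezing criteria are established in the cited sources.
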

If a semigroup satisfies the estimate $|u(t,z)|\le |z|e^{-\kappa t }$ with some $\kappa>0$, it is called {\it exponentially squeezing}. The number $\kappa$ is called {\it squeezing ratio}.

To present another property of semigroups, which will appear below, we recall that semigroups by definition are well-defined for real non-negative values of the parameter $t$ only. However, it may happen that for every fixed $z\in\D$, the function $u(\cdot,z)$ can be analytically extended to some sector in the complex plane. Analyticity of semigroups was recently studied in \cite{A-C-P, E-J-17, E-S-Ta}, see also \cite[Chapter 6]{E-R-Sbook}. The following fact will be used in the sequel.

\begin{theorem}\label{thm-analyt}
  Let $\alpha,\beta\in(0,\frac\pi2)$. The semigroup $\{u(t,\cdot)\}_{t\ge0}$ generated by~$f$, $f(z)=zp(z),$ can be analytically extended to the sector $\{t\in\C: \ \arg t\in(-\alpha,\beta)\}$ for all $z\in\D$ if and only if $-\frac\pi2+\alpha<\arg p(z)<\frac\pi2-\beta,\ z\in\D$.
\end{theorem}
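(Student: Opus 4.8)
The plan is to reduce the statement to a one-parameter family of Berkson--Porta conditions via the classical rotation trick. For a real parameter $\phi$ consider the rotated field $e^{i\phi}f$. Since $f(z)=zp(z)$ and $f(0)=0$, the origin is the only possible null point of $e^{i\phi}f$ in $\D$, so $e^{i\phi}f$ is an infinitesimal generator precisely when, by~\eqref{B-P-repres} applied to $e^{i\phi}f(z)=z\bigl(e^{i\phi}p(z)\bigr)$, one has $\Re\bigl(e^{i\phi}p(z)\bigr)\ge0$ on $\D$; equivalently $-\tfrac\pi2-\phi\le\arg p(z)\le\tfrac\pi2-\phi$ for every $z\in\D$, where $\arg p=\Im\log p$ is the branch with values in $(-\tfrac\pi2,\tfrac\pi2)$. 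First I would record the elementary but pivotal fact that, for $\phi$ real, $u(\cdot,z)$ extends analytically along the ray $\{\arg t=\phi\}$ \emph{as a family of self-maps of $\D$} if and only if $e^{i\phi}f$ is a generator, and that in this case the extension along that ray is $s\mapsto v_\phi(s,z)$, where $\{v_\phi(s,\cdot)\}_{s\ge0}$ is the semigroup generated by $e^{i\phi}f$. Since the rays $\{\arg t=\phi\}$, $\phi\in(-\alpha,\beta)$, foliate the open sector, the theorem then amounts to: the extension exists on $\{\arg t\in(-\alpha,\beta)\}$ if and only if $e^{i\phi}f$ is a generator for every $\phi\in(-\alpha,\beta)$, which by the displayed equivalence intersected over $\phi$ translates to the stated condition on $p$.

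For the necessity direction, suppose $u(\cdot,z)$ admits a holomorphic extension $U$ to the sector with $U(t,\cdot)\in\Hol(\D)$. For fixed $z$ the function $t\mapsto\partial_tU(t,z)+f\bigl(U(t,z)\bigr)$ is holomorphic on the sector and vanishes on the positive real axis, hence vanishes identically; restricting to the ray $\arg t=\phi$ shows $v(s):=U(se^{i\phi},z)$ solves $\partial_s v=-e^{i\phi}f(v)$, $v(0)=z$, with values in $\D$, so $e^{i\phi}f$ is a generator for each $\phi\in(-\alpha,\beta)$. Intersecting the strips $[-\tfrac\pi2-\phi,\tfrac\pi2-\phi]$ over $\phi\in(-\alpha,\beta)$ gives $\arg p(z)\in[-\tfrac\pi2+\alpha,\tfrac\pi2-\beta]$ for all $z\in\D$. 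The strict inequality in the statement then follows automatically: when $p$ is non-constant, $\arg p=\Re(-i\log p)$ is the real part of a non-constant holomorphic function, hence an open map of $\D$ into $\R$, so its image is an \emph{open} subset of that closed interval and therefore lies in its interior (the constant-$p$ case is checked directly). Conversely, the strict inequality $-\tfrac\pi2+\alpha<\arg p(z)<\tfrac\pi2-\beta$ on $\D$ immediately yields $\Re\bigl(e^{i\phi}p\bigr)>0$ on $\D$, hence makes $e^{i\phi}f$ a generator, for every $\phi\in(-\alpha,\beta)$.

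It remains to assemble the global extension from the ray-wise semigroups, and this is the step I expect to be the main obstacle. Given that $e^{i\phi}f$ generates a semigroup $v_\phi(s,\cdot)\in\Hol(\D)$ for each $\phi\in(-\alpha,\beta)$, define $U(t,z):=v_{\arg t}(|t|,z)$ for $t\ne0$ in the sector and $U(0,z):=z$; this is unambiguous, reduces to the original semigroup on the positive real axis, and satisfies the functional equation $U(t_1+t_2,\cdot)=U(t_1,\cdot)\circ U(t_2,\cdot)$ whenever $t_1,t_2,t_1+t_2$ lie in the (convex) sector. The difficulty is that $U(\cdot,z)$ is a priori only real-analytic along each ray, and one must upgrade it to a genuinely holomorphic function of $t$ on the open sector. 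I would do this by a normal-families/Vitali argument: the family $\{U(t,\cdot)\}$ (with $t$ ranging over the sector) is uniformly bounded, its values lying in $\overline\D$, hence precompact in $\Hol(\D,\C)$, and combining the differential relation $\partial_s v_\phi=-e^{i\phi}f(v_\phi)$ with the functional equation one shows that $t\mapsto U(t,z)$ satisfies the Cauchy--Riemann equations at interior points of the sector, identifying candidate limits through the functional equation. Once joint holomorphy is established, $U$ is the desired analytic extension and differentiating it recovers $\partial_tU+f(U)=0$, closing the equivalence. A secondary point requiring care is to fix precisely the meaning of ``analytic extension of the semigroup'' so that degenerate configurations — notably $f$ linear, i.e. $p$ a nonzero constant, where the extension may reach a boundary ray or even all of $\C$ — do not create a spurious open/closed mismatch at $\arg t=-\alpha$ or $\arg t=\beta$.
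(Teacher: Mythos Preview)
The paper does not contain its own proof of Theorem~\ref{thm-analyt}: it is stated in Section~\ref{sect-prelim} as a preliminary result quoted from the literature (the references \cite{A-C-P, E-J-17, E-S-Ta} and \cite[Chapter~6]{E-R-Sbook}), so there is nothing in the paper to compare your argument against.

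That said, your overall strategy---the rotation $f\mapsto e^{i\phi}f$ together with the Berkson--Porta criterion, then intersecting over $\phi\in(-\alpha,\beta)$---is exactly the mechanism behind the cited proofs, and your necessity direction is essentially complete. The soft spot is the sufficiency step, where you define $U(t,z):=v_{\arg t}(|t|,z)$ ray-wise and then propose to verify the Cauchy--Riemann equations via a normal-families/Vitali argument. As written this is a genuine gap: you have real-analytic dependence on $s$ along each ray but no a~priori control on $\partial_\phi v_\phi$, and Vitali's theorem needs a set with an accumulation point on which holomorphy is already known, which you do not yet have off the positive axis. The standard and cleaner route (used in the references) is to invoke the local existence and uniqueness theory for holomorphic ODEs: since $f$ is holomorphic and each $v_\phi$ stays in $\D$, the equation $W'(t)=-f(W(t))$ has a unique holomorphic solution in a complex disk around every point $s_0e^{i\phi}$ of every ray, agreeing with $v_\phi$ there; uniqueness then forces these local solutions to patch into a single holomorphic function on the simply connected sector. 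Alternatively, when $q=f'(0)\neq0$ one can use the K{\oe}nigs linearization $h\circ u(t,\cdot)=e^{-qt}h$ and set $U(t,z)=h^{-1}(e^{-qt}h(z))$, checking directly that this is well-defined and $\D$-valued for $t$ in the sector. Either replacement closes your argument.
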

In this connection we also notice that due to the exponential formula~\eqref{expo-f}, the analyticity of a semigroup in some sector follows from the analyticity of all the resolvents in the same sector; see \cite[Section~6.2]{E-R-Sbook}.

 \vspace{2mm}

To proceed, recall several important classes of univalent functions intensively studied in geometric function theory.

\begin{defin}\label{def-starlike}
  Let $h \in \Hol(\D,\C),\ h(0)=0$ and  $h'(0)\neq 0$. We say that 
  \begin{itemize}
\item [] 
$h$ is starlike of order $\alpha\in(0,1) $ if $\Re\left(\frac{zh'(z)}{h(z)}\right)>\alpha$ for all $z\in\D$;
\item []
$h$ is  $\theta$-spirallike of order $\alpha\in(0,1) $ if $\Re \left(e^{-i\theta}\frac{zh'(z)}{h(z)}\right)>\alpha \cos \theta$ for all $z\in\D$;
\item [] 
$h$ is strongly starlike of order $\beta\in(0,1)$ if $\left|\arg\frac{zh'(z)}{h(z)}\right|<\frac{\pi\beta}{2}$ for all $z\in\D.$
  \end{itemize}
\end{defin}
Concerning the classes of starlike and spirallike functions, the  reader can be referred to the books \cite{Good, G-K}. Specifically we will need the following fact which can be obtained using the Riesz--Herglotz formula.

\begin{propo}[Problem 4, p. 172 in \cite{Good}]\label{prop-present}
Let $h \in \Hol(\D,\C),\ h(0)=0$ and  $h'(0)\neq 0$.  Then the function $h$ is starlike of order $\alpha$ if and only if it admits the integral representation
\[
h(z)=z \exp\left[ -2(1-\alpha)\oint_{\partial\D} \log\left( 1-z\overline{\zeta}\right)d\mu(\zeta) \right]
\]
  with some probability measure $\mu$ on the unit circle.
\end{propo}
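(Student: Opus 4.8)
The statement is an equivalence, and both directions rest on the classical identification of starlikeness of order $\alpha$ with membership in the Carath\'eodory class $\Pp=\{Q\in\Hol(\D,\C):\ \Re Q>0,\ Q(0)=1\}$, followed by the Riesz--Herglotz representation of $\Pp$. I would take $h$ normalized by $h'(0)=1$: the hypothesis $\Re\frac{zh'(z)}{h(z)}>\alpha$ is invariant under $h\mapsto ch$ with $c\neq0$, and the displayed representation in any case forces $h'(0)=1$, so this is precisely the relevant normalization.

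First I would set $P(z):=\frac{zh'(z)}{h(z)}$ and record two elementary observations. Since $h(z)=z\,(1+O(z))$ near the origin, $P$ has a removable singularity at $0$ with $P(0)=1$. Moreover, if $h$ vanished at some $z_0\in\D\setminus\{0\}$ of order $m\ge1$, then near $z_0$ one would have $P(z)=\frac{m\,z_0}{z-z_0}\,(1+o(1))$, so $\Re P$ would be unbounded below near $z_0$, contradicting $\Re P>\alpha$. Hence, for the ``only if'' part, $h$ is zero-free on $\D\setminus\{0\}$, the function $h(z)/z$ is holomorphic and nowhere zero on $\D$, and $P\in\Hol(\D,\C)$ satisfies $P(0)=1$ and $\Re P>\alpha$. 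Consequently $Q:=\frac{P-\alpha}{1-\alpha}$ belongs to $\Pp$, and by the Riesz--Herglotz theorem there is a probability measure $\mu$ on $\partial\D$ with $Q(z)=\oint_{\partial\D}\frac{1+z\overline{\zeta}}{1-z\overline{\zeta}}\,d\mu(\zeta)$.

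The heart of the argument is then a logarithmic integration. From the representation of $Q$ one obtains $P(z)-1=(1-\alpha)(Q(z)-1)=2(1-\alpha)\oint_{\partial\D}\frac{z\overline{\zeta}}{1-z\overline{\zeta}}\,d\mu(\zeta)$; dividing by $z$ and using $\frac{\partial}{\partial z}\log(1-z\overline{\zeta})=-\frac{\overline{\zeta}}{1-z\overline{\zeta}}$ gives
\[
\frac{d}{dz}\log\frac{h(z)}{z}=\frac{P(z)-1}{z}=-2(1-\alpha)\,\frac{d}{dz}\oint_{\partial\D}\log\left(1-z\overline{\zeta}\right)d\mu(\zeta).
\]
Differentiation under the integral sign is justified because $(z,\zeta)\mapsto\log(1-z\overline{\zeta})$ --- principal branch, well defined since $\Re(1-z\overline{\zeta})>0$ on $\D\times\partial\D$ --- is holomorphic in $z$, jointly continuous, and locally bounded. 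Integrating and observing that both primitives vanish at $z=0$ (here the normalization $h'(0)=1$ is used), one arrives at exactly $h(z)=z\exp\left[-2(1-\alpha)\oint_{\partial\D}\log(1-z\overline{\zeta})\,d\mu(\zeta)\right]$.

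For the converse I would run this computation backwards: from the representation, differentiating $\log\frac{h(z)}{z}$ yields $\frac{zh'(z)}{h(z)}=\alpha+(1-\alpha)\oint_{\partial\D}\frac{1+z\overline{\zeta}}{1-z\overline{\zeta}}\,d\mu(\zeta)$, and then $\Re\frac{1+z\overline{\zeta}}{1-z\overline{\zeta}}=\frac{1-|z|^2}{|1-z\overline{\zeta}|^2}>0$ for $z\in\D$ and $|\zeta|=1$, together with $\mu(\partial\D)=1$, forces $\Re\frac{zh'(z)}{h(z)}>\alpha$ on $\D$; the normalization conditions $h(0)=0$ and $h'(0)=1$ are immediate from the formula. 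The only substantial input is the Riesz--Herglotz representation of $\Pp$ (which is where ``using the Riesz--Herglotz formula'' comes in); the rest is the bookkeeping of the logarithmic primitive and keeping track of the constant of integration, and that is the step I would handle most carefully.
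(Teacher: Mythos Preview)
Your proof is correct and proceeds exactly along the lines the paper indicates: the proposition is stated there as a known result (Problem~4, p.~172 in \cite{Good}), with only the remark that it ``can be obtained using the Riesz--Herglotz formula,'' and your argument carries this out in full---reducing to $Q=\frac{P-\alpha}{1-\alpha}\in\Pp$, applying Riesz--Herglotz, and integrating the logarithmic derivative. Your handling of the normalization $h'(0)=1$ (which the displayed representation forces, though the hypothesis only assumes $h'(0)\neq0$) and of the zero-freeness of $h$ on $\D\setminus\{0\}$ fills in the small points one has to check, so there is nothing to add.
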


\begin{defin}\label{def-starlike}
  Let $h\in \Hol(\D),\ h(0)=0,$ be a univalent function. One says that $h$ is hyperbolically convex if for all points $a, b \in h(\D)$, the arc of the hyperbolic geodesic in $\D$ connecting these points lies in $h(\D)$.
\end{defin}
More details on hyperbolic geodesics can be found \cite[Section 1.3]{B-C-DM-book}. As to hyperbolically convex functions, see, for example, \cite{Ma-Mi-94, Me-Mi-91}. It was proved in \cite{Me-Pomm} that every hyperbolically convex function is starlike of order $\frac12$\,.

\bigskip

\section{Covering and distortion results}\label{sect-cov-dist}
\setcounter{equation}{0}

The purpose of this section is to establish covering and distortion theorems for families of nonlinear resolvents. We start with a more general situation.

Let $\alpha, \beta \in\C$ with $\Re \alpha \overline{\beta} >0.$ Consider the class $\A_{\alpha,\beta}$ consisting of functions holomorphic in the open unit disk $\D$ and satisfying the conditions
  \begin{equation}\label{ineq}
 \A_{\alpha,\beta}=\!\left\{ F: F(0)=F'(0)-\beta=0,\,  \Re \frac1\alpha\left(\frac{F(z)}{ z}-\beta\right) >-\frac 1 2\!\right\}\!.
  \end{equation}
The inequality in~\eqref{ineq} is equivalent to $\displaystyle\frac1\alpha\left(\frac{F(z)}{ z}-\beta\right) \prec \frac{z}{1-z}, $ where the subordination relation means that there exists a function $\omega \in \Omega$ such that
$\displaystyle\frac1\alpha\left(\frac{F(z)}{ z}-\beta\right) = \frac{\omega(z)}{1-\omega(z)}.$
Define $
\displaystyle\psi(z)=\beta+\frac{\alpha z}{1-z}\,.
$
Then
\begin{equation}\label{classA}
\A_{\alpha,\beta}:=\left\{F \in \Hol(\D,\C): \frac{F(z)}{z}\prec\psi\right\}.
\end{equation}
Clearly, every $F \in \A_{\alpha,\beta}$ is locally univalent at the origin and the inverse function $F^{-1}$ satisfies $F^{-1}(0)=0$. Consider the set of inverse functions
\begin{equation}\label{classB}
\BB_{\alpha,\beta}:=\left\{F^{-1} : F\in \A_{\alpha,\beta}\right\}.
\end{equation}
We now establish the radius of univalence for  the class $\BB_{\alpha,\beta}$ as well as covering and distortion results.

     \begin{theorem}\label{th_posi1}
For $\alpha, \beta \in\C$ with $\Re \alpha \overline{\beta} >0, $   denote $M=1-\Re\frac{\beta}{\alpha}\,$.
   Every function $G\in\BB_{\alpha,\beta}$ is univalent  in the disk $D_R$, where
 \[
  R= \left\{ \begin{array}{lc}
                |\alpha|\left(\frac{1}{2}-M\right) , & \  \mbox{if } \Re \frac{\beta}{\alpha}>\frac34\,, \vspace{2mm} \\
                  |\alpha|\left(1-\sqrt{M}\right)^2  \,, &  \ \mbox{if }   \Re \frac{\beta}{\alpha} \le \frac34\,,
                \end{array}
 \right.
 \]
and satisfies $D_{ R_1} \supset G(D_{R}) \supset D_{ R_2}$ with
\[
 R_1= \left\{ \begin{array}{ccc}
                 1, & \quad \mbox{if } \Re \frac{\beta}{\alpha}>\frac34\,, \vspace{2mm}\\
                  \frac{1}{\sqrt{M}}-1 , &  \quad\mbox{if } \Re \frac{\beta}{\alpha}\le\frac34\,,
                \end{array}
 \right. \qquad   R_2=\displaystyle\frac{RR_1}{R_1|\beta| + \sqrt{R_1^2|\beta|^2-R^2}}\,.
 \]
  \end{theorem}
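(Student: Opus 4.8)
The plan is to extract all three assertions — the univalence radius $R$, the distortion bound $G(D_R)\subset D_{R_1}$, and the covering $G(D_R)\supset D_{R_2}$ — from one elementary estimate on the functions $F\in\A_{\alpha,\beta}$ themselves, and then to transfer everything to the inverse by a degree‑theoretic argument. First I would normalise: replacing $F$ by $\alpha^{-1}F$, i.e. $G$ by $G(\alpha\,\cdot\,)$, reduces everything to $\alpha=1$; one checks that $R$ then acquires the factor $|\alpha|$ while $R_1$ and $R_2$ are left unchanged, in agreement with the stated formulas. So take $\alpha=1$, $\psi(z)=\beta+\frac z{1-z}$, $\Re\beta=1-M$.

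\textbf{Step 1 (disc estimate).} Write $F(z)/z=\psi(\omega(z))$ with $\omega\in\Omega$; by the Schwarz lemma $|\omega(z)|\le|z|$, so $F(z)/z\in\psi(\overline{D_\rho})$ for $\rho=|z|$, and since $\psi$ is a Möbius map this is the closed disc with centre $c_\rho=\beta+\frac{\rho^2}{1-\rho^2}$ and radius $s_\rho=\frac{\rho}{1-\rho^2}$. Hence for $|z|=\rho$,
\[
\rho\,(\Re c_\rho-s_\rho)\le|F(z)|\le\rho\,(|c_\rho|+s_\rho),\qquad \Re c_\rho-s_\rho=\frac{1-M(1+\rho)}{1+\rho}.
\]
Thus $\min_{|z|=\rho}|F(z)|\ge\phi(\rho):=\frac{\rho\,(1-M(1+\rho))}{1+\rho}$, and $\phi$ increases on $[0,t^*]$ with $t^*=\frac1{\sqrt M}-1$ and $\phi(t^*)=(1-\sqrt M)^2$; when $M<\tfrac14$ one has $t^*>1$, so $\phi$ increases on $[0,1)$ with supremum $\tfrac12-M$. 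This is exactly where $R$ and $R_1$ come from: $R=|\alpha|\phi(R_1)$ with $R_1=t^*$ when $M\ge\tfrac14$, and $R=|\alpha|\sup\phi$ with $R_1=1$ when $M<\tfrac14$. I would also record that $\psi(\D)$ is the half-plane $H=\{\Re(w-\beta)>-\tfrac12\}$, so that $|F(z)/z|$ exceeds the distance $\tfrac12-M$ from $0$ to $\partial H$ when $M<\tfrac12$, and that $\tfrac12-M\le R$ with equality iff $M\le\tfrac14$.

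\textbf{Step 2 (extending $G$ to $D_R$ — the crux).} Let $V$ be the connected component of $\{z\in\D:|F(z)|<R\}$ containing $0$; I would show $F|_V\colon V\to D_R$ is a homeomorphism, so its inverse extends the germ $F^{-1}$ univalently to $D_R$ and $G(D_R)=V$. For this: (a) $F|_V$ is proper — on the part of $\partial V$ inside $\D$ one has $|F|=R$, and $V$ does not run to $\partial\D$ badly: if $M>\tfrac14$ any arc from $0$ to $\{|z|=R_1\}$ meets a point with $|F|\ge\phi(R_1)=R$, so $V\subset D_{R_1}\Subset\D$; if $M\le\tfrac14$ the bound $|F(z)|\ge R|z|$ forces properness directly; (b) $F$ has no critical point in $V$. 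For (b), if $F'(z_0)=0$ with $z_0\in V$ and $w_0=\omega(z_0)$, then the identity $\psi(w)/\psi'(w)=-z\omega'(z)$ at $z_0$, the Schwarz–Pick bound $|z_0\omega'(z_0)|\le|z_0|\frac{1-|w_0|^2}{1-|z_0|^2}$, the inequality $|z_0|<R/|\psi(w_0)|$ valid on $V$, and the identity $|\psi(w)|\,|1-w|=|\beta-(\beta-1)w|$ together force $|\beta-(\beta-1)w_0|^2<R\bigl(R|1-w_0|^2+1-|w_0|^2\bigr)$; but the left side minus the right has the form $P+Q|w|^2+2\Re(w\bar T)$ with $P\ge0$, and a case-by-case check shows its minimum over $\overline{D_{R_1}}$ is non-negative — a contradiction. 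Granting (a) and (b), $F|_V$ is a proper local homeomorphism, hence a covering map, hence — $D_R$ being simply connected and $V$ connected — a homeomorphism.

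\textbf{Step 3 (distortion) and Step 4 (covering).} The distortion $G(D_R)=V\subset D_{R_1}$ is then immediate (proved in (a) when $M>\tfrac14$, trivial when $M\le\tfrac14$ as $R_1=1$). For the covering it suffices, since $V$ is the $0$‑component of $\{|F|<R\}$, to verify $\max_{|z|=R_2}|F(z)|\le R$, so $D_{R_2}\subset V$; I would combine the upper bound $|F(z)|\le|z|(|c_{|z|}|+s_{|z|})$ with the inclusion $G(D_R)\subset D_{R_1}$ just obtained — equivalently, with the Carathéodory function $w\mapsto\frac2\alpha(\tfrac w{G(w)}-\beta)+1$ on $D_R$ and the resulting bound $|w/G(w)|>|w|/R_1$ — and read off the stated $R_2$ after a computation; well‑definedness ($R\le R_1|\beta|$) follows from $|\beta|\ge\Re\beta=1-M$ and the explicit $R,R_1$. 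The hard part, I expect, is Step 2(b): one would like to say simply that $F$ is univalent on $D_{R_1}$ and invoke the standard "inverse of a univalent map on a disc" lemma, but the univalence radius of $\A_{\alpha,\beta}$ genuinely depends on $\Im(\beta/\alpha)$ (it equals $R_1$ only when $\beta/\alpha$ is real), whereas $R$ and $R_1$ do not — so one must argue on the true sub‑level set $V$ and rule out critical points there, which is the delicate estimate above; everything else is manipulation of Möbius images of discs and a covering‑space argument.
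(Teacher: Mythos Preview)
Your route is genuinely different from the paper's, and the covering step (Step~4) does not yield the stated $R_2$. The paper does almost no direct work on univalence or distortion: after writing $\alpha=2\beta se^{-i\theta}$ so that \eqref{ineq} becomes $\Re\bigl(e^{i\theta}F(z)/(\beta z)\bigr)>\cos\theta-s$, it invokes Corollary~3.2 and Theorem~3.1 of \cite{E-S-2020a} to obtain at once that $G$ is univalent on $D_R$, that $G(D_R)\subset D_{R_1}$, and --- the key extra ingredient --- that $G(D_R)$ is a \emph{hyperbolically convex} subdomain of $D_{R_1}$. The covering then comes from the Mej\'{\i}a--Minda/Ma--Minda theorem applied to $h(z)=G(Rz)/R_1$: since $h'(0)=R/(\beta R_1)$, one has $h(\D)\supset D_{|h'(0)|/(1+\sqrt{1-|h'(0)|^2})}$, and rescaling gives exactly the stated $R_2$. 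Your Schwarz-type upper bound on $|F|$ produces a \emph{different} radius: for $\alpha=\beta=1$ (so $R=\tfrac12$, $R_1=1$) the sharp bound is $|F(z)|\le|z|/(1-|z|)$, hence $\max_{|z|=\rho}|F|\le R$ holds for $\rho=\tfrac13$, whereas the theorem's formula gives $R_2=1/(2+\sqrt3)\approx0.268$. The Carath\'eodory-function remark does not help, since $|w/G(w)|>|w|/R_1$ bounds $|G|$ from below and says nothing about covering. So either prove that your radius always dominates the stated $R_2$, or use hyperbolic convexity as the paper does; you cannot simply ``read off'' that formula from your estimate.

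Separately, Step~2(b) is only sketched, and the omitted ``case-by-case check'' is where the actual difficulty sits. The inequality $|\beta-(\beta-1)w_0|^2<R\bigl(R|1-w_0|^2+1-|w_0|^2\bigr)$ does follow from Schwarz--Pick together with $|z_0|<R/|\psi(w_0)|$ as you indicate, but one must then show that $\Phi(w):=|\beta-(\beta-1)w|^2-R^2|1-w|^2-R(1-|w|^2)$ is nonnegative on $\overline{D_{R_1}}$. After normalising $\alpha=1$, the regime $M<\tfrac14$ is clean: one computes $P=Q=|T|=(\Im\beta)^2+\tfrac14$, hence $\Phi(w)\ge P(1-|w|)^2\ge0$. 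But for $M\ge\tfrac14$ the global minimum of $\Phi$ over $\C$ is negative and one must genuinely exploit $|w_0|<R_1<1$; for real $\beta$ one finds $\min_{|w|=R_1}\Phi=\Phi(-R_1)=0$ \emph{exactly}, so there is no slack, and for complex $\beta$ the minimiser moves off the real axis. This verification is precisely the content that the paper outsources to \cite{E-S-2020a}, and it needs to be carried out rather than asserted.
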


 \begin{proof} Let us represent the number $\frac{\alpha}{2\beta}$ in the form $se^{-i\theta}$, or $\alpha=2\beta se^{-i\theta}$, 
 where $|\theta|<\frac\pi2$ since ${\Re\alpha\overline{\beta}>0}$. Then by \eqref{ineq}, we  get  $ \Re e^{i\theta}\frac{F(z)}{\beta z} >\cos\theta-s.$ 
 Therefore, the embedding  $ G(D_{ R}) \subset D_{ R_1}$ follows from \cite[Corollary 3.2]{E-S-2020a}. Moreover, the proof of Theorem~3.1 
 in \cite{E-S-2020a} yields that the function $G$ is univalent.

 It remains to prove the covering relation $D_{R_2} \subset G(D_{R})$.
 It follows from Corollary 3.2 in  \cite{E-S-2020a} that $G$ maps $D_R$ onto a hyperbolically convex subdomain of the disk $D_{R_1}$. Therefore, the function $h$ defined by  $h(z)={G(Rz)}/{R_1}$
  belongs to $\Hol(\D)$ and is hyperbolically convex. By the result in \cite{Me-Mi-91} (see also \cite[Theorem~2]{Ma-Mi-94}), the image $h(\D)$ contains the disk of radius $\frac{|h'(0)|}{1+\sqrt{1-|h'(0)|^2}}\,$. Since $h'(0)=\frac{G'(0)R}{R_1}=\frac{R}{\beta R_1}\,,$ one concludes that $G(D_R)$ contains the disk of radius
 \[
    \frac{|h'(0)|R_1}{1+\sqrt{1-|h'(0)|^2}} = \frac{RR_1}{R_1|\beta| + \sqrt{R_1^2|\beta|^2-R^2}} = R_2.
 \]
 The proof is complete.
 \end{proof}

 \begin{remar}
  In the proof of this theorem, the hyperbolic convexity was used only for the purpose of proving the covering result. 
  It is worth mentioning that certain covering result can be obtained in the absence of this property.
 Indeed, let $w\not\in G(D_{R}) $. Then the function $h$ defined by $h(z)=\displaystyle\frac{G(Rz)}{w-G(Rz)}$ is holomorphic and univalent in the unit disk. It can easily be seen that
   \[
   h(0)=0,\quad h'(0)=\frac{G'(0)R}w\quad\mbox{and}\quad h''(0)=\frac{G''(0)w + 2G'(0)^2}{ w^2}R^2.
   \]
   Since by the famous Bieberbach theorem $|h''(0)|\le 4|h'(0)|$ (see, for example, \cite{G-K}), we have
   \[
   4|G'(0)w|\ge R\left| G''(0)w+2G'(0)^2\right| \ge 2R|G'(0)|^2-R|G''(0)w|.
   \]
     It follows from \cite[Proposition~4.1]{E-J-21a} that $G'(0)=\frac1\beta$ and $|G''(0)|\le \frac{|\alpha|}{|\beta|^3}$. 
     This leads to $|w|\ge \displaystyle\frac{|\beta|R}{|\beta|^2 +|\Re \beta|R}$. Thus, $G(D_R)$ covers the disk of radius $\displaystyle\frac{|\beta|R}{|\beta|^2 +|\Re \beta|R}.$
 \end{remar}

\begin{examp}\label{ex-starlike}
  Consider the set of all functions $F\in\Hol(\D,\C)$ such that $F(0)=F'(0)-1=0$ and $\Re\frac{F(z)}{z}\ge\frac12\,.$ This is equivalent to  $F\in \A_{1,1}$, that is, to the choice  $\alpha=\beta=1\,.$   Theorem~\ref{th_posi1} implies that every $G\in\BB_{1,1}$ is univalent in the disk of radius $\frac{1}{2}$ and  $D_{\frac{1}{2+\sqrt{3}}}\subset G(D_{\frac{1}{2}})\subset\D.$
\end{examp}

We now apply Theorem~\ref{th_posi1} to  the special case where function $\psi$ maps the open unit disk onto the half-plane ${\{w:\,\Re w>1\}}$.
Keeping in mind our interest in resolvents, we fix $q\in\C$ with $\Re q>0$ and choose $\beta=1+rq$ and $\alpha=2r \Re q$. Consider the net $\{\psi_r\}_{r>0}$ such that
\begin{equation}\label{psi-r}
\psi_r(z)=1+r\,\frac{q+\overline{q}z}{1-z}=1+rq+2r\Re q\sum_{n=1}^\infty z^n.
\end{equation}
Accordingly, $\A_r:=\{F \in \Hol(\D,\C): \frac{F(z)}{z}\prec\psi_r\}$, cf. \eqref{classA}.
We then formulate criteria for a holomorphic function $F$ to belong to the class~$\A_r$.
\begin{lemma}
  Let $F\in\Hol(\D,\C),\ F(0)=0.$ Then the following conditions are equivalent:
  \begin{itemize}
  \item [(i)] $F\in \A_r$;
  \item [(ii)] $\Re \frac{F(z)}{z}\ge1$ for all $z\in\D$ and $F'(0)=1+rq$;
  \item [(iii)] $F(z)=z+rz\frac{q+\overline{q}\omega(z)}{1-\omega(z)}$ for some $\omega\in\Omega$;
 \item [(iv)] the function $f$ defined by $f(z)=\frac{F(z)-z}r$ is a generator on $\D$.
\end{itemize}
\end{lemma}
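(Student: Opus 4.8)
The plan is to use the representation in~(iii) as the hub and prove $(i)\Leftrightarrow(iii)$, then $(iii)\Rightarrow(ii)\Rightarrow(iii)$, and finally $(iii)\Leftrightarrow(iv)$. The one elementary fact behind everything is that, since $\Re q>0$, the identity $\frac{q+\overline{q}\,w}{1-w}=q+2\Re q\cdot\frac{w}{1-w}$ exhibits $w\mapsto\frac{q+\overline{q}\,w}{1-w}$ as a conformal map of $\D$ onto the half-plane $\{\Re\zeta>0\}$ taking $0$ to $q$; equivalently, by~\eqref{psi-r}, $\psi_r$ maps $\D$ conformally onto $\{\Re w>1\}$ with $\psi_r(0)=1+rq$.

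The equivalence $(i)\Leftrightarrow(iii)$ is just the definition of subordination recalled right after~\eqref{ineq}: $F\in\A_r$ means $F(z)/z=\psi_r(\omega(z))$ for some $\omega\in\Omega$, and inserting the formula for $\psi_r$ and multiplying by $z$ yields exactly the representation in~(iii); reading the same identity backwards gives $(iii)\Rightarrow(i)$. For $(iii)\Rightarrow(ii)$ I would evaluate at the origin, using $\omega(0)=0$, to get $F'(0)=1+rq$, and observe that $\Re\frac{F(z)}{z}=1+r\Re\frac{q+\overline{q}\,\omega(z)}{1-\omega(z)}\ge1$ because the Möbius map above has values in the right half-plane.

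For $(ii)\Rightarrow(iii)$, put $g(z):=F(z)/z$, holomorphic on $\D$ since $F(0)=0$, with $g(0)=F'(0)=1+rq$ and $\Re g\ge1$. If $g$ is constant it equals $\psi_r(0)$ and $\omega\equiv0$ works; otherwise $\Re g$ is a nonconstant harmonic function with $\Re g(0)=1+r\Re q>1$, so by the minimum principle $\Re g>1$ throughout $\D$, i.e.\ $g(\D)\subset\{\Re w>1\}=\psi_r(\D)$, and then $\omega:=\psi_r^{-1}\circ g$ is a holomorphic self-map of $\D$ with $\omega(0)=\psi_r^{-1}(1+rq)=0$, so $\omega\in\Omega$ and $F(z)/z=\psi_r(\omega(z))$, which is~(iii). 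Finally, for $(iii)\Leftrightarrow(iv)$ I would invoke the Berkson--Porta characterization~\eqref{B-P-repres} (see also Theorem~\ref{teorA}): with $p(z):=\frac{q+\overline{q}\,\omega(z)}{1-\omega(z)}$, formula~(iii) reads $f(z)=(F(z)-z)/r=zp(z)$ with $\Re p\ge0$, so $f$ is a generator vanishing at $0$, giving $(iii)\Rightarrow(iv)$; conversely, if $f=(F-z)/r$ is a generator then $f(0)=0$ (because $F(0)=0$), hence $f(z)=zp(z)$ with $\Re p\ge0$, so $F(z)/z=1+rp(z)$ has real part $\ge1$ and, with the standing normalization $f'(0)=q$, also $F'(0)=1+rq$ — that is, $(ii)$ holds, and $(ii)\Rightarrow(iii)$ has already been shown.

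None of the steps is hard. The two places that call for a little care are: the passage from the \emph{closed} half-plane $\{\Re w\ge1\}$ in~(ii) to the \emph{open} half-plane $\psi_r(\D)$, needed to define the Schwarz function $\omega$ and supplied by the minimum principle once one notes that $g(0)$ lies strictly inside; and the bookkeeping of the derivative normalization in~(iv), where the generator $f=(F-z)/r$ is to be read together with $f'(0)=q$ (equivalently $F'(0)=1+rq$), since otherwise one recovers only $\Re\frac{F(z)}{z}\ge1$. With that understood, no genuine obstacle arises.
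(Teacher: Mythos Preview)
Your argument is correct and is, in substance, exactly what the paper means when it writes ``This lemma follows directly from our notations and formula~\eqref{B-P-repres}.'' The paper gives no detailed proof; you have simply supplied the details, and they are the right ones: the equivalence $(i)\Leftrightarrow(iii)$ is the definition of subordination together with the explicit form~\eqref{psi-r} of $\psi_r$, $(iii)\Leftrightarrow(ii)$ is the fact that $\psi_r$ maps $\D$ onto $\{\Re w>1\}$, and $(iii)\Leftrightarrow(iv)$ is the Berkson--Porta representation~\eqref{B-P-repres}.

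Two remarks on the care you took. First, your minimum-principle step in $(ii)\Rightarrow(iii)$, passing from $\Re g\ge1$ to $\Re g>1$ via $g(0)=1+rq$ with $\Re q>0$, is a genuine detail the paper's one-line justification suppresses; it is needed and correct. Second, your observation about the normalization in~(iv) is well taken: as stated, (iv) alone only yields $\Re\frac{F(z)}{z}\ge1$, and the conclusion $F'(0)=1+rq$ requires reading $q$ as $f'(0)$, which is indeed the standing convention in the paper (fixed just before~\eqref{psi-r} and used immediately after the lemma in~\eqref{G*-1}). This is a wrinkle in the \emph{statement} rather than in your proof.
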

This lemma follows directly from our notations and formula~\eqref{B-P-repres}. If it is the case, assertions (iii) and (iv) immediately imply that
\begin{equation} \label{G*-1}
 f(z)=z\frac{q+\overline{q}\omega(z)}{1-\omega(z)}\,,\quad  \omega\in\Omega.
\end{equation}
Hence the (right) inverse function ${F^{-1}\!=:G(=G_r)}$, which, in fact, solves the functional equation
\begin{equation} \label{G*-2}
G_r+ rf\circ G_r=\Id,
\end{equation}
is holomorphic in the open unit disk $\D$ by Theorem~\ref{teorA}. (Recall that $G_r$ is called the resolvent of $f$. It is a univalent self-mapping of $\D$, see Section~\ref{sect-prelim}.)

In what follows, we focus on the family
$$\JJ_r:=\BB_{2r\Re q,1+rq},$$
cf. \eqref{classB}, consisting of the resolvents $G_r=(\Id + rf)^{-1}$ with a fixed $q=f'(0)$.

\vspace{2mm}

 As we have already mentioned, the resolvent family converges to  the null point of $f$  as $r \to \infty$, uniformly on compact subsets of the unit disk. 
 We will now show that for $r>\frac{2}{\Re q}$, nonlinear resolvents extend holomorphically to a disk of prescribed radius,   and prove the distortion and covering results. This enables us to establish that $G_r$ tends to $0$ as $r\to\infty$ uniformly on $\D$.

 \begin{theorem}\label{th_resol1}
Let $r>\frac2{\Re q}\,.$ Every element $G_r$ of $\JJ_r$ can be extended as a univalent function to the disk $D_{\rho(r)},\  \rho(r)={\displaystyle \left(\sqrt{2r \Re q}-\sqrt{r \Re q -1}\right)^2>1}$, and satisfies
\begin{itemize}
  \item [(a)] $D_{ \rho_1(r)}\supset G_r(D_{ \rho(r) }) \supset D_{\rho_2(r)}$ with $$\rho_1(r)= \displaystyle\sqrt{\frac{2r\Re q}{r\Re q-1}}  -1,\ \quad  \rho_2(r)=\displaystyle\frac{\left(\sqrt{2r \Re q}-\sqrt{r \Re q -1}\right)^2} {|1+rq| + \sqrt{2+r\Re q+r^2|q|^2}}\,;$$
  \item [(b)] $G_r(\D)\subset D_{\rho_3(r)},$ where $\displaystyle\rho_3(r)=\frac{3}{1+r\Re q}$.
\end{itemize}
Furthermore, $G_r(\D)\supset D_{\rho_4(r)},$ where $\displaystyle \rho_4(r)=\frac1{|1+rq| + \sqrt{|1+rq|^2 -1}}\,$ for any $r>0.$
\end{theorem}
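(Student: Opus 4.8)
The plan is to obtain every assertion from Theorem~\ref{th_posi1} (applied with the right $\alpha,\beta$) together with one elementary Harnack-type estimate. Since $\JJ_r=\BB_{2r\Re q,\,1+rq}$, I would apply Theorem~\ref{th_posi1} with $\alpha=2r\Re q>0$ and $\beta=1+rq$; its hypothesis holds because $\Re\alpha\overline\beta=2r\Re q\,(1+r\Re q)>0$. A direct computation gives $M=1-\Re\frac\beta\alpha=\frac{r\Re q-1}{2r\Re q}$, and $\Re\frac\beta\alpha=\frac{1+r\Re q}{2r\Re q}>\frac34$ is equivalent to $r\Re q<2$; hence under the standing hypothesis $r>\frac2{\Re q}$ we are in the second case of Theorem~\ref{th_posi1}. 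Substituting this $M$ into $R=|\alpha|(1-\sqrt M)^2$ and $R_1=\frac1{\sqrt M}-1$ and simplifying gives precisely $R=\rho(r)$ and $R_1=\rho_1(r)$, while the claimed inequality $\rho(r)>1$ becomes, after clearing square roots, $(r\Re q-2)^2>0$. This already yields the univalent extension of $G_r$ to $D_{\rho(r)}$ and the inclusion $G_r(D_{\rho(r)})\subset D_{\rho_1(r)}$.

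For the covering $D_{\rho_2(r)}\subset G_r(D_{\rho(r)})$, Theorem~\ref{th_posi1} gives this with $\rho_2(r)$ replaced by $R_2=\frac{RR_1}{R_1|\beta|+\sqrt{R_1^2|\beta|^2-R^2}}$. The key observation is the identity $R_1^2=\frac{R}{r\Re q-1}$, immediate from the closed forms of $R$ and $R_1$, which turns $R_2$ into $\dfrac{R}{|\beta|+\sqrt{|\beta|^2-R(r\Re q-1)}}$. Since $R=\rho(r)>1$, $r\Re q-1>0$ and $|\beta|^2=|1+rq|^2=1+2r\Re q+r^2|q|^2$, we get $|\beta|^2-R(r\Re q-1)\le|\beta|^2-(r\Re q-1)=2+r\Re q+r^2|q|^2$, whence $R_2\ge\rho_2(r)$ and therefore $D_{\rho_2(r)}\subset D_{R_2}\subset G_r(D_{\rho(r)})$.

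For part (b), take $w\in G_r(\D)$ and set $\zeta=G_r^{-1}(w)=F(w)$, where $F\in\A_r$; then $|F(w)|<1$. The function $\frac{F(z)}z-1$ has non-negative real part on $\D$ and equals $rq$ at the origin, so the Harnack inequality gives $\Re\frac{F(w)}w\ge 1+r\Re q\,\dfrac{1-|w|}{1+|w|}$, hence $1>|F(w)|\ge|w|\Big(1+r\Re q\,\dfrac{1-|w|}{1+|w|}\Big)$; solving this quadratic inequality in $|w|$ yields $|w|<\frac1{r\Re q-1}\le\frac3{1+r\Re q}=\rho_3(r)$, the last step being valid because $r\Re q>2$. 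Finally, for the assertion about $G_r(\D)$: the resolvent $G_r$ is a hyperbolically convex self-mapping of $\D$ with $G_r(0)=0$ and $G_r'(0)=\frac1{1+rq}$ (see \cite{E-S-S, E-J-21a}; note $|1+rq|\ge1+r\Re q\ge1$), so the hyperbolically convex covering result of \cite{Me-Mi-91} already used in the proof of Theorem~\ref{th_posi1} gives $G_r(\D)\supset D_{\rho_4(r)}$ with $\rho_4(r)=\dfrac{|G_r'(0)|}{1+\sqrt{1-|G_r'(0)|^2}}=\dfrac1{|1+rq|+\sqrt{|1+rq|^2-1}}$, and this argument imposes no restriction on $r>0$.

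The genuinely non-routine parts are the two algebraic reductions: checking that the case-$2$ formulas of Theorem~\ref{th_posi1} collapse exactly to the stated $\rho(r)$ and $\rho_1(r)$ (and that $\rho(r)>1\Leftrightarrow(r\Re q-2)^2>0$), and the identity $R_1^2=\frac{R}{r\Re q-1}$ that makes the covering radius $R_2$---hence the clean lower bound $\rho_2(r)$---explicit. Apart from that, one only needs to locate in the literature the facts that $G_r$ is hyperbolically convex on $\D$ and that $G_r'(0)=\frac1{1+rq}$, and to carry out the short quadratic estimate behind part (b); I expect the algebra of Step~2 to be the main bookkeeping obstacle.
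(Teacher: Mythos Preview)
Your proof is correct and largely parallels the paper's: the univalent extension and the two inclusions in part~(a) come from Theorem~\ref{th_posi1} with $\alpha=2r\Re q$, $\beta=1+rq$, and the covering by $D_{\rho_4(r)}$ is obtained exactly as in the paper from the hyperbolic convexity of $G_r$ together with the Mej\'{\i}a--Minda covering bound. Your handling of $\rho_2(r)$ is in fact slightly more careful than the paper's, which simply says ``substitute'': direct substitution into the formula for $R_2$ would give $\rho_2(r)$ on the nose only if $\rho(r)=1$, and your identity $R_1^2=R/(r\Re q-1)$ together with $\rho(r)>1$ supplies the missing inequality $R_2\ge\rho_2(r)$.

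The one genuine difference is part~(b). The paper deduces it \emph{from} part~(a): the map $h(z)=G_r(\rho(r)z)/\rho_1(r)$ is a holomorphic self-map of $\D$ fixing $0$, so Schwarz's lemma gives $|G_r(\zeta)|\le \rho_1(r)|\zeta|/\rho(r)$ for $\zeta\in D_{\rho(r)}$, and in particular $|G_r(\zeta)|\le \rho_1(r)/\rho(r)\le 3/(1+r\Re q)$ for $\zeta\in\D$. Your argument is instead a direct Harnack estimate on $F(z)/z-1$ and makes no use of the extension to $D_{\rho(r)}$; it yields the intermediate bound $|G_r(\zeta)|<1/(r\Re q-1)$, which is actually sharper than the paper's $\rho_1(r)/\rho(r)$ for $r\Re q>2$, before both are relaxed to $\rho_3(r)$. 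The paper's route has the aesthetic advantage of showing that (b) is a formal consequence of (a); yours has the advantage of being self-contained and of producing a better constant along the way.
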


\begin{proof}
  Since $\beta= 1+rq$ and $\alpha= 2r\Re q$, the condition $r>\frac{2}{\Re q}$ is equivalent to $\Re\frac\beta\alpha<\frac34\,$. 
  Following the notation in Theorem~\ref{th_posi1}, $M=1-\Re\frac{\beta}{\alpha}=\frac{r\Re q-1}{2r\Re q}>\frac14\,.$ We now substitute 
  these expressions for those in Theorem~\ref{th_posi1} and then arrive at the univalence of $G_r$ in $D_{\rho(r)}$ as well as the inclusions in part (a).

 In fact, the proved distortion result implies part (b). Indeed, consider the function $h$ defined by $h(z):=\frac{G_r(\rho(r)z)}{\rho_1(r)}\,$. 
 By part (a),  $h$ is a self-mapping of the open unit disk and hence by the Schwarz lemma, we have $|h(z)|\le|z|$, or equivalently, 
 $|G_r(\rho(r)z)|\le \rho_1(r)|z|$ for all $z\in\D$. Denote $\zeta=\rho(r)z$. Then $|G_r(\zeta)|\le \frac{\rho_1(r)|\zeta|}{\rho(r)}$ for $\zeta\in D_{\rho(r)}.$ 
 Since $\rho(r)>1$, one can take, in particular, $\zeta\in\D$. This yields
\[
|G_r(\zeta)|\le \frac{\rho_2(r)}{\rho(r)} = \frac{1}{(r\Re q-1)\left( \sqrt{\frac{2r\Re q}{r\Re q-1}} -1 \right)} \le\frac{3}{1+r\Re q}\,,
\]
which proves part (b).

  To complete the proof, we recall that according to a result in \cite{E-S-S}, every resolvent is a hyperbolically convex function. 
  In addition, it follows from \cite {Me-Mi-91} (see also \cite[Theorem 2]{Ma-Mi-94}) that the image of the unit disk under every 
  hyperbolically convex function $h$ normalized by $h(0)=0$,  $h'(0)=\delta\in(0,1),$ contains the disk of radius $\frac\delta{1+\sqrt{1-\delta^2}}$. 
  Since $G_r'(0)=\frac1{1+rq}\,$, we obtain \\  $G_r(\D)\supset D_{\rho_4(r)}$, which completes the proof.
\end{proof}

Part (b) of the above theorem immediately implies the following fact.
\begin{corol}\label{corr-boundary-fix}
The net $\{G_r\}_{r>0}$ converges to zero uniformly on $\D$ as~${r\to\infty}$.
\end{corol}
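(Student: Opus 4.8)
The plan is to read off the conclusion directly from part~(b) of Theorem~\ref{th_resol1}, which already contains all the substance. First I would fix $r>\frac2{\Re q}$ so that part~(b) applies, and recall that it gives $G_r(\D)\subset D_{\rho_3(r)}$ with $\rho_3(r)=\frac{3}{1+r\Re q}$; equivalently,
\[
\sup_{z\in\D}|G_r(z)|\le \frac{3}{1+r\Re q}.
\]
Since $\Re q>0$ by assumption, the right-hand side tends to $0$ as $r\to\infty$. As the bound is independent of $z\in\D$, this is precisely the statement that $G_r\to 0$ uniformly on the whole open unit disk.

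One small point worth making explicit is that the limit is indeed the \emph{zero} function and not merely that $\|G_r\|_{\D}\to0$ in some weaker sense: for each $r$ the resolvent satisfies $G_r(0)=0$ (the inverse of $F\in\A_r$ fixes the origin), and the uniform estimate above forces $|G_r(z)|\to0$ for every $z$ simultaneously, so the net converges uniformly to the constant map $0$, consistently with \eqref{D-Wp} in the case $\tau=0$. No genuine obstacle arises here; the only thing to check is that the hypothesis $r>\frac2{\Re q}$ of Theorem~\ref{th_resol1}(b) is harmless, which it is, since uniform convergence as $r\to\infty$ only concerns the behavior for large~$r$. Hence it suffices to write: \emph{By Theorem~\ref{th_resol1}(b), $\sup_{z\in\D}|G_r(z)|\le \frac{3}{1+r\Re q}\to0$ as $r\to\infty$, which proves the claim.}
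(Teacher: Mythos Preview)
Your proof is correct and follows exactly the paper's approach: the corollary is stated immediately after Theorem~\ref{th_resol1} with the remark that part~(b) of that theorem immediately implies it, and your argument is precisely the unpacking of that sentence. No changes are needed.
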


In addition, it was shown in \cite{E-S-S} that a point $\zeta \in \partial \D $
is a boundary regular fixed point of $G_{r}$, the resolvent of $f$, if and only if it is a boundary regular null point of  $f$ and $r<1/|f^{\prime }(\zeta )|<+\infty$.
At the same time, it follows from \cite{E-S-Ta11} that $1/|f^{\prime }(\zeta )|\leq {2}/{\Re q}$. Consequently, if $r>{2}/{\Re q}$, 
then $G_r$ has no boundary fixed points. The last conclusion follows directly from part (b) of Theorem~\ref{th_resol1} as well.

\bigskip

\section{Order of starlikness and spirallikeness}\label{sec-main}
\setcounter{equation}{0}

In this section, we present our approach to obtaining geometric and dynamic properties of nonlinear resolvents.
For this, the results proven above will intensively be used. We first describe the range of the function $\frac{wG'_r(w)}{G_r(w)}$. 
This enables us to establish the order of starlikeness, order of spirallikeness and order of strong starlikeness (see Definition~\ref{def-starlike}) 
of the resolvent $G_r$ as a function depending on the resolvent parameter $r$.

To this end, we define the function
 \begin{equation}\label{Ar}
 A(r):=\displaystyle\frac{6r (1+r )}{(1+r)^3- 3(5r-1) }
 \end{equation}
 and denote the largest real root of the equation $A(r)=1$ by $r_0$. It can be calculated that $r_0 = 1+2\sqrt{7}\cos\left(\frac{1}{3}\arctan\frac{3\sqrt{31}}{8}\right) \approx 5.92434$.

\begin{theorem}\label{th-r-star}
Let $G_r\in \JJ_r$, where $r\Re q> r_0$. Then for all $w\in\D,$
\begin{equation*}
\left|\frac{wG'_r(w)}{G_r(w)} - \frac{1}{1-A^2(r\Re q)}\right| \le \frac{A(r\Re q)}{1-A^2(r\Re q)}\,.
\end{equation*}
\end{theorem}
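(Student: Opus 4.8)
The plan is to combine the functional equation defining the resolvent with the covering bound of Theorem~\ref{th_resol1}(b) and a half-plane estimate. Write $F:=\Id+rf$, so that $F\in\A_r$ and, setting $P:=F/z$, one has $P\prec\psi_r$, $P(0)=F'(0)=1+rq$, and hence $\Re P>1$ on $\D$ by the minimum principle; moreover $G_r=F^{-1}$ in the sense that $F\circ G_r=\Id$ on $\D$, cf.~\eqref{G*-2}. Differentiating this identity and using $w=F(z)=zP(z)$ with $z:=G_r(w)$ gives
\[
\frac{wG_r'(w)}{G_r(w)}=\frac{P(z)}{F'(z)}=\frac{P(z)}{P(z)+zP'(z)}=\frac{1}{1+\Phi(z)},\qquad \Phi(z):=\frac{zP'(z)}{P(z)}.
\]
The Möbius transformation $\zeta\mapsto(1+\zeta)^{-1}$ maps the closed disk $\{|\zeta|\le A\}$ onto the closed disk centered at $\tfrac{1}{1-A^2}$ of radius $\tfrac{A}{1-A^2}$ whenever $A<1$; since $r\Re q>r_0$ forces $A(r\Re q)<1$ (recall that $r_0$ is the largest root of $A=1$ while $A(r)\to0$ as $r\to\infty$), it therefore suffices to prove $|\Phi(z)|\le A(r\Re q)$ for all $z\in G_r(\D)$.

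Put $R:=r\Re q$ and $t:=|z|$. By Theorem~\ref{th_resol1}(b), $G_r(\D)\subset D_{\rho_3(r)}$ with $\rho_3(r)=\tfrac{3}{1+R}<1$ (here $R>r_0>2$), so $t<\tfrac{3}{1+R}$. Applying to $h:=P-1$ the standard estimate $|h'(z)|\le\tfrac{2\Re h(z)}{1-|z|^2}$, valid for any $h$ with positive real part in $\D$, yields $|P'(z)|\le\tfrac{2(\Re P(z)-1)}{1-t^2}$, hence $|\Phi(z)|\le\tfrac{2t}{1-t^2}\cdot\tfrac{\Re P(z)-1}{|P(z)|}$. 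On the other hand, $P\prec\psi_r$ and the Schwarz lemma give $P(z)\in\psi_r(\overline{D_t})$; since $\psi_r(\zeta)=\beta+\tfrac{2R\zeta}{1-\zeta}$ (with $\beta=1+rq$) is a fractional linear map, $\psi_r(\overline{D_t})$ is the closed disk whose center has real part $1+R\tfrac{1+t^2}{1-t^2}$ and whose radius is $\tfrac{2Rt}{1-t^2}$. Because $\xi\mapsto\tfrac{\Re\xi-1}{|\xi|}$ is strictly decreasing in $|\Im\xi|$ on the half-plane $\{\Re\xi>1\}$, its supremum over that disk does not exceed its supremum over the disk of the same radius centered on the real axis, i.e.\ its value at the right endpoint of the real diameter; a short computation then gives
\[
\frac{\Re P(z)-1}{|P(z)|}\le\frac{R(1+t)}{(1-t)+R(1+t)}\,,
\]
a bound that, notably, no longer involves $\Im q$.

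Combining the last two estimates, $|\Phi(z)|\le\Psi(t)$ with
\[
\Psi(t)=\frac{2Rt}{(1-t)\bigl[(1-t)+R(1+t)\bigr]}=\frac{2Rt}{(1+R)-2t-(R-1)t^{2}}\,.
\]
The numerator of $\Psi'(t)$ equals $2R\bigl[(1+R)+(R-1)t^{2}\bigr]>0$, and the denominator of $\Psi$ (a downward parabola in $t$) is positive at $t=0$ and at $t=\rho_3(r)$, hence positive on $[0,\rho_3(r)]$; so $\Psi$ is increasing there and $|\Phi(z)|\le\Psi(\rho_3(r))$. Substituting $\rho_3(r)=\tfrac{3}{1+R}$, so that $1-\rho_3(r)=\tfrac{R-2}{1+R}$ and $1+\rho_3(r)=\tfrac{R+4}{1+R}$, and using $(1+R)^{3}-3(5R-1)=(R-2)(R^{2}+5R-2)$, one obtains
\[
\Psi(\rho_3(r))=\frac{6R(1+R)}{(R-2)(R^{2}+5R-2)}=A(R)=A(r\Re q),
\]
which is the desired bound; translating it back through the Möbius transformation of the first paragraph yields the statement.

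The heart of the argument — and its only non-routine part — is the third step: identifying precisely the disk $\psi_r(\overline{D_{|z|}})$ in which $P(z)$ must lie and maximising $\tfrac{\Re\xi-1}{|\xi|}$ over it, in particular the observation that increasing $|\Im q|$ can only decrease this quantity, so that the resulting bound is governed by $r\Re q$ alone. Once this is in place, the monotonicity of $\Psi$ and the algebraic reduction to $A(r\Re q)$ are straightforward, and the rest is bookkeeping with the functional equation and the covering estimate of Theorem~\ref{th_resol1}.
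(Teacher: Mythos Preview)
Your proof is correct and follows essentially the same route as the paper's: both reduce the statement, via the resolvent equation and the M\"obius map $\zeta\mapsto(1+\zeta)^{-1}$, to showing $\left|\dfrac{zP'(z)}{P(z)}\right|\le A(r\Re q)$ for $|z|\le\rho_3(r)$, and both exploit the positive-real-part structure of $P-1=rp$ to produce the very same one-variable function $\Psi(t)=\dfrac{2Rt}{(1+R)-2t-(R-1)t^2}$, evaluated at $t=\rho_3(r)$. The only difference is packaging: the paper writes out the Riesz--Herglotz representation of $p$ and bounds the ratio of the two resulting integrals by the pointwise supremum of the kernel ratio $A_r(z,\zeta)$, whereas you invoke the ready-made Carath\'eodory estimate $|h'(z)|\le\dfrac{2\Re h(z)}{1-|z|^2}$ for $h=P-1$ and then bound $\dfrac{\Re P-1}{|P|}$ geometrically via the image disk $\psi_r(\overline{D_t})$. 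Your monotonicity-in-$|\Im\xi|$ observation amounts, after unwinding, to the inequality $|P|\ge\Re P$ together with the fact that $x\mapsto 1-1/x$ is increasing---exactly what the paper uses implicitly when passing from $|1+rp|$ to $\Re(1+rp)$---so neither argument is sharper than the other.
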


\begin{proof}
Recall that $G_r=(\Id +rf)^{-1}$, where $f(z)=zp(z)$ with $p(z)=\frac{q+\overline{q}\omega(z)}{1-\omega(z)}\,$, $\omega \in \Omega$, see \eqref{G*-1}. Hence, $\Re p(z)>0$ for all $z\in \D$.
Further, formula \eqref{G*-2} implies $\frac{w}{G_r(w)}=1+rp(G_r(w))$. Thus differentiating \eqref{G*-2} we get
\begin{equation}\label{starGr}
\frac{wG'_r(w)}{G_r(w)}=\frac{1+rp\circ G_r(w)}{1+r\left(p\circ G_r(w)+ G_r(w) \cdot p' \circ G_r(w)\right)}\,.
\end{equation}
According to assertion (b) of Theorem~\ref{th_resol1}, the inequality $|G_r(w)|\leq\frac{3}{1+r\Re q}$ holds for all $w\in \D$. 
Hence, our aim is to find the range of  $\frac{1+rp(z)}{1+r\left(p(z)+ z  p' (z)\right)}$ whenever $|z|\leq \frac{3}{1+r\Re q}$.

The Riesz--Herglotz formula gives
\begin{equation*}
p(z)=\int_{|\zeta|=1} \frac{1+z\overline{\zeta}}{1-z\overline{\zeta}}\,d\mu(\zeta)+i\gamma,
\end{equation*}
for some non-negative measure $\mu$ on the unit circle and a number $\gamma \in \R,$ so that
\[
p(0)=q=\int_{|\zeta|=1}d\mu(\zeta)+i\gamma.
\]
Denote also
\begin{eqnarray*}\label{ineqA1}
A_r(z,\zeta)&:=&\frac{2 r\Re q|z\overline{\zeta}|}{1+r\Re q- 2\Re z\overline{\zeta} + |z\overline{\zeta}|^2 (1-r\Re q) }\,,
\end{eqnarray*}
\begin{eqnarray*}\label{ineqA1}
B_r(z)&:=&\Re (1+rp(z)) \qquad \text{and} \qquad C_r(z):=\left| rzp'(z) \right|.
\end{eqnarray*}
One can see that for all $z$ with $|z|<\frac{3}{1+r\Re q}$ and $\zeta$ with $|\zeta|=1$,
\begin{equation*}\label{ineqAr}
 A_r(z,\zeta)\leq A_r\left(\frac3{1+r\Re q},1 \right) =  A(r\Re q).
\end{equation*}
Also,
\begin{equation*}
B_r(z)=\int_{|\zeta|=1} \frac{1+r\Re q- 2\Re z\overline{\zeta} + |z\overline{\zeta}|^2 (1-r\Re q) }{\Re q |1-z\overline{\zeta}|^2}d\mu(\zeta),
\end{equation*}
so that
\begin{eqnarray*}
C_r(z) &\leq& \int_{|\zeta|=1} \frac{2 r|z\overline{\zeta}|}{|1-z\overline{\zeta}|^2}d\mu(\zeta) \\
\nonumber   &=& \int_{|\zeta|=1}  A_r(z,\zeta)\frac{1+r\Re q- 2\Re z\overline{\zeta} + |z\overline{\zeta}|^2 (1-r\Re q) }{\Re q |1-z\overline{\zeta}|^2}d\mu(\zeta)\\
\nonumber  &\leq& A(r\Re q)B_r(z).
\end{eqnarray*}
Thus,
\begin{equation*}
 \left|  \frac{rzp'(z)} {1+rp(z)}\right| \leq \frac{C_r(z)} {B_r(z)}\leq  A(r\Re q).
\end{equation*}

Therefore, the function $\displaystyle\frac{1+r\left(p(z)+ z  p' (z)\right)}{1+rp(z)}$ takes values in the disk centered at $1$ and of radius $A(r\Re q)$. 
A straightforward calculation based on formula~\eqref{starGr} shows that all of the values of $\displaystyle\frac{wG'_r(w)}{G_r(w)}$ are located in the disk 
centered at $\displaystyle \frac{1}{1-A^2(r\Re q)}$ and of radius $\displaystyle \frac{A(r\Re q)}{1-A^2(r\Re q)}\,.$ The proof is complete.
\end{proof}

Bearing in mind that Definition~\ref{def-starlike} of order of starllikeness and spirallikeness involves the range of the function $\frac{wG'_r(w)}{G_r(w)}$ described in this theorem, we  deduce the following geometric conclusion.
\begin{corol}\label{corr-order-sp-st}
Let $G_r\in \JJ_r$, where $r\Re q> r_0$ and $\theta \in \R$ with $|\theta| \leq \arccos \frac{6}{r\Re q}$.
Then $G_r$ is a $\theta$-spirallike function of order $$\displaystyle \alpha_{r,\theta}:=\frac{\cos \theta -A(r\Re q)}{(1-A^2(r\Re q))\cdot \cos \theta}\,.$$
Consequently, $G_r$ is starlike of order $\displaystyle \alpha_r:=\frac1{1+A(r\Re q)}$ and strongly starlike of order $\beta_r:=\displaystyle\frac2\pi \arcsin A(r\Re q)$.
\end{corol}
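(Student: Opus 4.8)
The plan is to extract all three conclusions directly from the disk estimate of Theorem~\ref{th-r-star}. Throughout put $A:=A(r\Re q)$, $c:=\frac{1}{1-A^2}$ and $\rho:=\frac{A}{1-A^2}$. Since $r\Re q>r_0$ and $r_0$ is the largest real root of $A(r)=1$, one has $A\in(0,1)$, hence $c>\rho>0$. Theorem~\ref{th-r-star} asserts that the holomorphic function $g(w):=\frac{wG'_r(w)}{G_r(w)}$ (with $g(0)=1$) maps $\D$ into the \emph{closed} disk $\{\zeta:|\zeta-c|\le\rho\}$. The first step is to upgrade this to the open disk: if $g$ is non-constant, then $(g-c)/\rho$ is a non-constant holomorphic map of $\D$ into $\overline{\D}$, so the maximum modulus principle gives $|g-c|<\rho$ on $\D$; if $g\equiv 1$ (which happens precisely when $f(z)=qz$), then $|1-c|=\frac{A^2}{1-A^2}<\frac{A}{1-A^2}=\rho$ because $A<1$. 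In either case $g(\D)\subset D$, where $D:=\{\zeta:|\zeta-c|<\rho\}$.

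It then remains only to use elementary geometry of the open disk $D$, which lies in the right half-plane with $0<\rho<c$. For the $\theta$-spirallikeness: for $\zeta\in D$ we have $\Re\!\left(e^{-i\theta}\zeta\right)=c\cos\theta+\Re\!\left(e^{-i\theta}(\zeta-c)\right)>c\cos\theta-\rho$, and a one-line computation gives $c\cos\theta-\rho=\frac{\cos\theta-A}{1-A^2}=\alpha_{r,\theta}\cos\theta$; since $|\theta|\le\arccos\frac{6}{r\Re q}<\frac{\pi}{2}$ we have $\cos\theta>0$, whence $\Re\!\left(e^{-i\theta}\frac{wG'_r(w)}{G_r(w)}\right)>\alpha_{r,\theta}\cos\theta$ for all $w\in\D$. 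Specialising to $\theta=0$ yields $\Re\frac{wG'_r(w)}{G_r(w)}>c-\rho=\frac{1}{1+A}=\alpha_r$, i.e.\ starlikeness of order $\alpha_r$ (this conclusion, and the next, uses only $r\Re q>r_0$). For the strong starlikeness: the two tangent lines from the origin to the circle $|\zeta-c|=\rho$ make angle $\arcsin(\rho/c)$ with the positive real axis, so every $\zeta\in D$ obeys $|\arg\zeta|<\arcsin(\rho/c)=\arcsin A=\frac{\pi\beta_r}{2}$; hence $\left|\arg\frac{wG'_r(w)}{G_r(w)}\right|<\frac{\pi\beta_r}{2}$ on $\D$.

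Finally I would check that the three orders lie in $(0,1)$, as required by Definition~\ref{def-starlike}. For $\alpha_r=\frac{1}{1+A}$ and $\beta_r=\frac{2}{\pi}\arcsin A$ this is immediate from $A\in(0,1)$. For $\alpha_{r,\theta}=\frac{\cos\theta-A}{(1-A^2)\cos\theta}$, the bound $\alpha_{r,\theta}<1$ is equivalent to $A\cos\theta<1$, which is clear, while $\alpha_{r,\theta}>0$ is equivalent to $\cos\theta>A$. For this last point I would combine $\cos\theta\ge\frac{6}{r\Re q}$ (because $\cos$ decreases on $[0,\frac{\pi}{2}]$ and $|\theta|\le\arccos\frac{6}{r\Re q}$) with the elementary inequality $A(s)<\frac{6}{s}$, valid whenever $s^2-6s+2>0$, i.e.\ for $s>3+\sqrt7$; since $r_0>3+\sqrt7$ this applies to $s=r\Re q$, giving $A<\frac{6}{r\Re q}\le\cos\theta$.

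The computations above are all short, so the only genuinely delicate point is the closed-to-open disk passage, which is what makes the inequalities in Definition~\ref{def-starlike} strict; besides that, the only step needing a little care is the verification that $A(r\Re q)<\cos\theta$, resting on the arithmetic fact $r_0>3+\sqrt7$.
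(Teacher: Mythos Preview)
Your argument is correct and follows exactly the route the paper intends: the corollary is stated there as an immediate consequence of Theorem~\ref{th-r-star} together with Definition~\ref{def-starlike}, and your elementary geometry of the disk $\{|\zeta-c|<\rho\}$ is the natural way to make this explicit. You add two points of rigor that the paper leaves implicit---the maximum-modulus passage from the closed to the open disk (needed for the strict inequalities in Definition~\ref{def-starlike}) and the verification that $\alpha_{r,\theta}\in(0,1)$ via $A(s)<6/s$ for $s>3+\sqrt{7}$---both of which are handled correctly.
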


It was shown in \cite{E-S-S} that $G_r$ is a starlike function of order $\frac12$ for each $r>0$. Note that if $r \Re q > r_0$, then $A(r \Re q)<1$ and $\lim\limits_{r\to \infty} A(r\Re q)=0$.
Thus, $\alpha_r > \frac{1}{2}$ and $\lim\limits_{r\to \infty} \alpha_r = 1$.
Therefore, Corollary~\ref{corr-order-sp-st} considerably improves the mentioned result. Moreover, we immediately get
\begin{corol}\label{cor-tends_to_z}
The net of functions  $\left\{(1+rq)G_r(z)\right\}$ converges to $z$  as $r\to \infty$, uniformly on compact subsets of the unit disk.
\end{corol}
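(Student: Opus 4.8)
The plan is to reduce the statement to Theorem~\ref{th-r-star}. Put $h_r := (1+rq)G_r$; since $G_r'(0)=\frac1{1+rq}$ this is a normalization, $h_r(0)=0$ and $h_r'(0)=1$, and it suffices to show $h_r\to\Id$. First I would observe that $\frac{wh_r'(w)}{h_r(w)}=\frac{wG_r'(w)}{G_r(w)}$ is holomorphic on all of $\D$ with value $1$ at the origin: $G_r$ is univalent with $G_r(0)=0$, so the origin is its only zero and it is simple, and the apparent singularity there is removable. For $r$ large enough that $r\Re q>r_0$, Theorem~\ref{th-r-star} confines this function to the disk centered at $\frac1{1-A^2(r\Re q)}$ of radius $\frac{A(r\Re q)}{1-A^2(r\Re q)}$; since $0<A(r\Re q)<1$, the maximal distance from $1$ to a point of that disk equals $\delta_r:=\frac{A(r\Re q)}{1-A(r\Re q)}$, so $\left|\frac{wh_r'(w)}{h_r(w)}-1\right|\le\delta_r$ on $\D$.

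Next I would sharpen this by the Schwarz lemma applied to $w\mapsto\frac{wh_r'(w)}{h_r(w)}-1$, which is holomorphic on $\D$, vanishes at the origin, and is bounded by $\delta_r$; this yields $\left|\frac{wh_r'(w)}{h_r(w)}-1\right|\le\delta_r|w|$. Since $h_r$ is univalent with a single simple zero at the origin, $h_r(w)/w$ is zero-free, so the branch $F_r(w):=\log\frac{h_r(w)}{w}$ with $F_r(0)=0$ is well defined and holomorphic, with $F_r'(w)=\frac1w\left(\frac{wh_r'(w)}{h_r(w)}-1\right)$ and hence $|F_r'(w)|\le\delta_r$ throughout $\D$. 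Integrating along the segment $[0,z]$ gives $|F_r(z)|\le\delta_r|z|\le\delta_r$, so
\[
|h_r(z)-z|=|z|\left|e^{F_r(z)}-1\right|\le e^{\delta_r}-1\qquad(z\in\D).
\]
As noted just before the statement, $A(r\Re q)\to0$ as $r\to\infty$, so $\delta_r\to0$, and therefore $h_r\to\Id$ uniformly on $\D$, in particular uniformly on compact subsets of $\D$.

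No serious obstacle arises here; the one point requiring care is the Schwarz-lemma step, since the plain bound $\left|\frac{wh_r'(w)}{h_r(w)}-1\right|\le\delta_r$ is by itself too weak — it would leave the integral $\int\frac{dt}{t}$ divergent near the origin — so the gain of the factor $|w|$ is essential. An alternative route, which yields the stated ``uniformly on compacta'' conclusion directly, is to use Corollary~\ref{corr-order-sp-st}, by which $h_r$ is starlike of order $\alpha_r=\frac1{1+A(r\Re q)}\to1$, together with Proposition~\ref{prop-present}: $\log\frac{h_r(z)}{z}=-2(1-\alpha_r)\oint_{\partial\D}\log\left(1-z\overline\zeta\right)d\mu_r(\zeta)$ for some probability measure $\mu_r$, whose right-hand side is bounded by $2(1-\alpha_r)$ times a constant depending only on $\sup|z|$ over the given compact set, and this tends to $0$.
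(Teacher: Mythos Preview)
Your argument is correct. In fact, your main route proves more than the corollary asserts: the bound $|h_r(z)-z|\le e^{\delta_r}-1$ is uniform in $z\in\D$, and since $\delta_r\to0$ this yields uniform convergence of the normalized resolvents on the \emph{whole} open unit disk, not merely on compacta. That is precisely the content of Question~8 in Section~\ref{sec-discuss}, which the paper leaves open. The key gain comes from the Schwarz-lemma step you flagged, which upgrades $\left|\frac{wh_r'(w)}{h_r(w)}-1\right|\le\delta_r$ to $\le\delta_r|w|$ and makes $F_r'$ uniformly bounded by $\delta_r$ on all of $\D$.

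By contrast, the paper deduces Corollary~\ref{cor-tends_to_z} ``immediately'' from Corollary~\ref{corr-order-sp-st} via the observation $\alpha_r\to1$; spelled out, this is exactly your alternative route using Proposition~\ref{prop-present}, and it yields convergence only on compact subsets because the integral $\oint_{\partial\D}\log(1-z\overline\zeta)\,d\mu_r(\zeta)$ blows up near the boundary. So your alternative route matches the paper's intended argument, while your primary argument is genuinely different --- a direct integration of the logarithmic derivative controlled by Theorem~\ref{th-r-star} plus Schwarz --- and is strictly stronger.
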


\begin{remar}\label{rem-1}
One can see that if $r\Re q > 6$, then $A(r) < \frac{6}{6-r_0+r}< \frac6r$. This gives the bounds for the orders of spirallikeness, starlikeness and strong starlikeness. Namely,
\begin{itemize}
  \item order of $\theta$-spirallikeness $\alpha_{r,\theta}$ is greater than  $\frac{r\Re q(r\Re q\cos \theta-6)}{((r\Re q)^2  -36)\cos \theta }$;
  \item order of starlikeness $\alpha_r$ is greater than  $\frac{6-r_0+r\Re q}{12-r_0+r\Re q}>\frac{r\Re q }{6+r \Re q }\,;$
  \item order of strong starlikeness $\beta_r$ is less than $\frac2\pi\arcsin\frac6{6-r_0+r\Re q}< \frac2\pi \arcsin \frac{6}{r\Re q}\,.$
\end{itemize}
\end{remar}

It was proved in \cite{F-K-Z} (see also \cite{Su-12}) that any strongly starlike function of order $\alpha$ extends to a $\sin(\pi\alpha/2)$-quasiconformal automorphism of $\C$. 
Therefore, Corollary~\ref{corr-order-sp-st} entails
\begin{corol}\label{cor-quasi}
  Any function $G_r\in\JJ_r,\ r\Re q> r_0$, can be extended to a $k$-quasiconformal  automorphism of \,$\C$ with  $k\bigl(=k(r)\bigr)=A(r\Re q)$.
\end{corol}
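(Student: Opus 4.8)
The plan is to obtain the statement as a direct consequence of the order of strong starlikeness established above, combined with the quasiconformal extension theorem for strongly starlike functions that was already recalled in the text. First I would note that, by Corollary~\ref{corr-order-sp-st}, the hypothesis $r\Re q>r_0$ guarantees that $G_r$ is strongly starlike of order $\beta_r=\frac2\pi\arcsin A(r\Re q)$. Since $r\Re q>r_0$ gives $0<A(r\Re q)<1$ (positivity being clear from the formula~\eqref{Ar} for $r>0$, and $A(r\Re q)<1$ being recorded right after Corollary~\ref{corr-order-sp-st}), we have $\beta_r\in(0,1)$, so $G_r$ belongs to a genuine class of strongly starlike functions of admissible order; in particular it is univalent on $\D$, which is what is needed for an extension to make sense.

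Next I would invoke the result of \cite{F-K-Z} (see also \cite{Su-12}): every strongly starlike function of order $\beta\in(0,1)$ extends to a $\sin(\pi\beta/2)$-quasiconformal automorphism of the whole plane $\C$. Applying this with $\beta=\beta_r$ and using that $\sin(\arcsin x)=x$ for $x\in[-1,1]$, one computes
\[
\sin\!\left(\frac{\pi\beta_r}{2}\right)=\sin\!\bigl(\arcsin A(r\Re q)\bigr)=A(r\Re q),
\]
so $G_r$ admits a $k$-quasiconformal extension to $\C$ with $k=k(r)=A(r\Re q)$, which is precisely the asserted bound.

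I do not expect a genuine obstacle here: the corollary is essentially a one-line specialization of Corollary~\ref{corr-order-sp-st} and the cited extension theorem. The only point requiring (routine) care is checking that $\beta_r$ really lies in the admissible range $(0,1)$, so that the extension theorem applies, and verifying that the outer $\sin$ and the inner $\arcsin$ cancel to yield the clean constant $A(r\Re q)$.
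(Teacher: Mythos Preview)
Your proposal is correct and follows essentially the same approach as the paper: the paper likewise deduces the corollary directly from Corollary~\ref{corr-order-sp-st} together with the quasiconformal extension theorem of \cite{F-K-Z} (see also \cite{Su-12}), using that $\sin(\pi\beta_r/2)=\sin\bigl(\arcsin A(r\Re q)\bigr)=A(r\Re q)$. Your additional check that $\beta_r\in(0,1)$ is a harmless elaboration of what the paper leaves implicit.
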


Note also that under the additional condition that a generator $f$ satisfies $\left| \arg\frac{f(z)}z \right| < \frac{\pi\alpha}{2}$, it was proved in \cite{E-S-S} that all its  resolvents admit a $(\sin\pi\alpha)$-quasiconformal extension to $\C$. Corollary \ref{cor-quasi} provides quasiconformal extension without additional conditions.
Moreover, $\lim\limits_{r \to \infty}k(r)=0$.



\bigskip

\section{Semigroups generated by resolvents}\label{sec-semig}
\setcounter{equation}{0}

As we have already mentioned in the introduction, every resolvent $G_r$ generates a semigroup, see \cite{E-S-S}. 
Therefore, it is natural to study the properties of semigroups generated by nonlinear resolvents. Our first observation is straightforward.

\begin{propo}\label{prop-bound-fix}
Let $r>0$. Then the semigroup $\{u(t,\cdot)\}_{t\ge0}$ generated by $G_r$ has no boundary regular fixed point\footnote{ It was noted by the anonymous reviewer that our proof and \cite [Proposition 13.6.1]{B-C-DM-book} imply that  $\{u(t,\cdot)\}_{t\ge0}$ does not have {\it any} boundary fixed point.}.
\end{propo}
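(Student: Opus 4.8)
The plan is to transfer the question from the semigroup to its generator $G_r$ and then exploit that $G_r$ is univalent and fixes the origin. Recall from the general theory of semigroups of holomorphic self-maps (see, e.g., \cite[Chapter~13]{B-C-DM-book}) that a point $\sigma\in\partial\D$ is a boundary regular fixed point of the semigroup $\{u(t,\cdot)\}_{t\ge0}$ generated by a holomorphic generator $g$ if and only if $\sigma$ is a boundary regular null point of $g$; in that case, in particular, the angular limit of $g$ at $\sigma$ exists and equals $0$. Taking $g=G_r$, it therefore suffices to show that $G_r$ has no boundary null point at all, that is, that there is no $\sigma\in\partial\D$ with $\angle\lim_{z\to\sigma}G_r(z)=0$.

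To rule this out I would use two facts. First, $G_r$ is a univalent self-map of $\D$ with $G_r(0)=0$ (the latter because $f(0)=0$, so that $(\Id+rf)(0)=0$); hence $G_r(\D)$ is a domain having $0$ as an interior point. Second, I would invoke the elementary observation that if a univalent function $h\in\Hol(\D,\C)$ has a finite angular limit $L$ at some $\sigma\in\partial\D$, then $L\in\partial h(\D)$. This is because $h$ is a homeomorphism of $\D$ onto $h(\D)$ with continuous inverse: if $z_n\to\sigma$ non-tangentially and $h(z_n)\to L$, then $L\notin h(\D)$, since otherwise $L=h(z_0)$ with $z_0\in\D$ would force $z_n=h^{-1}(h(z_n))\to z_0\in\D$, contradicting $z_n\to\sigma$; and $L$ is a limit of points of $h(\D)$, so $L\in\partial h(\D)$.

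Combining the two, a boundary null point $\sigma$ of $G_r$ would produce $0=\angle\lim_{z\to\sigma}G_r(z)\in\partial G_r(\D)$, which is impossible because $0$ is an interior point of $G_r(\D)$. Hence $G_r$ has no boundary null point, \emph{a fortiori} no boundary regular null point, and so the semigroup generated by $G_r$ has no boundary regular fixed point, as claimed. The same reasoning in fact excludes boundary null points that are not regular, which, together with \cite[Proposition~13.6.1]{B-C-DM-book}, accounts for the stronger conclusion noted in the footnote.

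The argument is almost entirely \emph{soft}; there is no computational obstacle. The only point that needs to be handled carefully is the dictionary between boundary regular fixed points of the semigroup and boundary regular null points of its generator, so that the problem is legitimately reduced to the non-existence of a boundary null point of $G_r$. Once this reduction is in place, the univalence of $G_r$ (valid for every $r>0$, in contrast to the disk-inclusion estimate of Theorem~\ref{th_resol1}(b), which needs $r>\frac{2}{\Re q}$) settles the statement for all $r>0$ at once.
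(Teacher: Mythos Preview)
Your proof is correct and follows essentially the same approach as the paper's: both reduce the statement to showing that $G_r$ has no boundary null point via the standard correspondence between boundary regular fixed points of a semigroup and boundary regular null points of its generator, and both then rule out such a boundary null point using the univalence of $G_r$ together with $G_r(0)=0$. The paper phrases the last step concretely (choosing a small disk around $0$ whose image covers some $D_\varepsilon$, so that $|G_r(z)|>\varepsilon$ outside that disk), while you phrase it topologically (an angular limit of a univalent map at a boundary point must lie on the boundary of the image), but these are two formulations of the same observation.
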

\begin{proof}
Indeed, $G_r$ is a holomorphic function that satisfies $G_r(0)=0.$ Therefore, there is $\varepsilon>0$ such that the image $G_r(D_{0.2})$ covers the disk $D_\varepsilon$. 
Since $G_r$ is univalent in $D$, we have $|G_r(z)|>\varepsilon$ whenever $0.8<|z|<1$. Consequently, $G_r$ has no boundary null point. 
Because each boundary regular fixed point of a semigroup should be boundary regular null point for its generator (see, for, example, \cite{E-S-book}), the  conclusion follows.
\end{proof}

In the next theorem we establish the uniform convergence (on the whole disk $\D$) of such semigroups as well as their analyticity in a sector  with respect to the parameter $t$.

\begin{theorem}\label{thm-estim}
  Let $G_r\in\JJ_r$ with $r\ge \frac6{\Re q}\,$. Denote $\gamma_r:=\frac{1-A(r\Re q)}{1+A(r\Re q)}$, where function $A$ is denoted by \eqref{Ar}. Then for the semigroup $\{u(t,\cdot)\}_{t\ge0}$ generated by $G_r$ the following assertions hold:
  \begin{itemize}
    \item [(i)]  $\{u(t,\cdot)\}_{t\ge0}$ is exponentially squeezing with squeezing ratio $\kappa(r) :=\displaystyle\frac{\left( \Re(1+rq)^\frac1{\gamma_r} \right)^{\gamma_r}} {2^{1-\gamma_r}|1+rq|^2}\,$ (so converges to $0$ as $r\to\infty$, uniformly on $\D $);
    \item [(ii)]  for every $z\in\D$, $u(\cdot,z)$ can be analytically extended to the sector $$\left\{t\in\C: \left|\arg t - \arg(1+rq)\right| < \frac{ \pi\gamma_r}{ 2}\right\}.$$
    \end{itemize}
  \end{theorem}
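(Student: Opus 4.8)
The strategy is to reduce both parts to a description of the set $p_r(\D)$, where $p_r(z):=G_r(z)/z$, and then to invoke Theorems~\ref{thm_kappa} and~\ref{thm-analyt}. Since $G_r$ is itself a generator vanishing at the origin (see~\cite{E-S-S}), the Berkson--Porta formula~\eqref{B-P-repres} applies to it: $G_r(z)=z\,p_r(z)$ with $\Re p_r(z)\ge0$ on $\D$; here $p_r$ is holomorphic and zero--free on $\D$ with $p_r(0)=G_r'(0)=(1+rq)^{-1}$, so $\Re p_r(0)=(1+r\Re q)/|1+rq|^{2}>0$ and thus $\Re p_r>0$ on $\D$ by the minimum principle. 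By Theorem~\ref{thm_kappa}, part~(i) will follow once we show $\Re p_r(z)\ge\kappa(r)$ for all $z\in\D$; by Theorem~\ref{thm-analyt}, part~(ii) will follow once we show that $\arg p_r(z)$ lies in $\bigl(-\frac{\pi}{2}+\alpha,\ \frac{\pi}{2}-\beta\bigr)$ for all $z\in\D$, with $\alpha=\frac{\pi\gamma_r}{2}-\arg(1+rq)$ and $\beta=\frac{\pi\gamma_r}{2}+\arg(1+rq)$ (which lie in $(0,\frac{\pi}{2})$ in the range where the stated conclusions are non-vacuous). So it all comes down to controlling the modulus and argument of $p_r$.

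To that end I would use the geometry established earlier. By Corollary~\ref{corr-order-sp-st}, $G_r$ is starlike of order $\alpha_r=(1+A(r\Re q))^{-1}$; since $r\Re q\ge6>r_0$ we have $A(r\Re q)<1$, hence $\alpha_r>\frac{1}{2}$, and one computes $2\alpha_r-1=\gamma_r$, i.e. $2(1-\alpha_r)=1-\gamma_r$, with $\gamma_r\in(0,1)$. The function $(1+rq)G_r$ is normalized (derivative $1$ at the origin) and has the same order of starlikeness, so Proposition~\ref{prop-present} supplies a probability measure $\mu$ on $\partial\D$ with
\[
P(z):=(1+rq)\,\frac{G_r(z)}{z}=\exp\!\left[-(1-\gamma_r)\oint_{\partial\D}\log\!\bigl(1-z\overline{\zeta}\bigr)\,d\mu(\zeta)\right],\qquad P(0)=1 .
\]

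The key point is a convexity one. For $z\in\D$ each value $\log(1-z\overline{\zeta})$ (principal branch) lies in the region $\Sigma:=\{u+iv:\ |v|<\frac{\pi}{2},\ u<\log(2\cos v)\}$, which is the logarithmic image of the disk $\{w:|w-1|<1\}$ and is \emph{convex} since $v\mapsto\log\cos v$ is concave. Hence the barycentre $\oint\log(1-z\overline{\zeta})\,d\mu(\zeta)$ again lies in $\Sigma$, so $\log P(z)\in-(1-\gamma_r)\Sigma$; separating real and imaginary parts this reads
\[
\bigl|\arg P(z)\bigr|<\frac{(1-\gamma_r)\pi}{2},\qquad \bigl|P(z)\bigr|>\left(2\cos\frac{\arg P(z)}{1-\gamma_r}\right)^{-(1-\gamma_r)}\qquad(z\in\D).
\]
Since $p_r=P/(1+rq)$, the first inequality gives $\arg p_r(z)\in\bigl(-\frac{(1-\gamma_r)\pi}{2}-\arg(1+rq),\ \frac{(1-\gamma_r)\pi}{2}-\arg(1+rq)\bigr)$, which is exactly the range required above; this yields~(ii). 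For~(i), write $\psi:=\arg P(z)$ and $\theta:=\arg(1+rq)$; using $\cos(\arg p_r(z))\ge0$ (as $\Re p_r\ge0$) together with the modulus bound,
\[
\Re p_r(z)=\frac{|P(z)|}{|1+rq|}\cos(\psi-\theta)\ \ge\ \frac{1}{|1+rq|}\left(2\cos\frac{\psi}{1-\gamma_r}\right)^{-(1-\gamma_r)}\cos(\psi-\theta),
\]
and it remains to minimise the right--hand side over the admissible $\psi$. Putting $\psi=(1-\gamma_r)\phi$ reduces this to minimising $g(\phi):=(2\cos\phi)^{-(1-\gamma_r)}\cos\bigl((1-\gamma_r)\phi-\theta\bigr)$ over $\phi\in(-\frac{\pi}{2},\frac{\pi}{2})$; its logarithmic derivative equals $(1-\gamma_r)\bigl(\tan\phi-\tan((1-\gamma_r)\phi-\theta)\bigr)$, which vanishes at $\phi^{*}=-\theta/\gamma_r$, and a sign check shows $g$ decreases and then increases, so this is the minimum. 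Its value is $2^{-(1-\gamma_r)}\bigl(\cos(\theta/\gamma_r)\bigr)^{\gamma_r}$, hence $\Re p_r(z)\ge 2^{\gamma_r-1}|1+rq|^{-1}\bigl(\cos(\theta/\gamma_r)\bigr)^{\gamma_r}$; since $\bigl(\Re(1+rq)^{1/\gamma_r}\bigr)^{\gamma_r}=|1+rq|\bigl(\cos(\theta/\gamma_r)\bigr)^{\gamma_r}$, this lower bound is exactly $\kappa(r)$. Theorem~\ref{thm_kappa} then gives $|u(t,z)|\le|z|e^{-\kappa(r)t}$, and $\kappa(r)\to0$ as $r\to\infty$ because $|1+rq|\to\infty$ while the remaining factors stay bounded.

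The hard part is this third step — converting ``$G_r$ is starlike of order $\alpha_r$'' into the two \emph{jointly compatible} estimates on $|P(z)|$ and $\arg P(z)$: without exploiting the convexity of the logarithmic image of $\{|w-1|<1\}$, the bounds on modulus and on argument coming from starlikeness are together too weak to recover the precise constant $\kappa(r)$ and the exact sector opening $\pi\gamma_r$. The one--variable extremal problem that follows is elementary but must be handled with care. Finally, one should note that $\kappa(r)$ is positive, and the sector in~(ii) is a proper subsector of a half--plane, only when $|\arg(1+rq)|$ is small relative to $\gamma_r$ (so that $\phi^{*}\in(-\frac{\pi}{2},\frac{\pi}{2})$); outside that range both assertions are trivial, so no generality is lost.
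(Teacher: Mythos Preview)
Your proof is correct and follows essentially the paper's route: both arguments reduce (i) and (ii) to Theorems~\ref{thm_kappa} and~\ref{thm-analyt} via bounds on $p_r(z)=G_r(z)/z$, both start from Corollary~\ref{corr-order-sp-st} and the integral representation of Proposition~\ref{prop-present}, and both end with the same one--variable extremal problem yielding $\kappa(r)$ and the sector opening $\pi\gamma_r$.

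The one genuine difference is the step that converts the integral representation into the constraint on $P(z)=(1+rq)G_r(z)/z$. The paper applies Proposition~\ref{prop-present} a second time to see that $zP(z)^{1/(1-\gamma_r)}$ is starlike of order $\tfrac12$, then invokes the Marx--Strohh\"acker theorem to conclude $\Re\bigl(P(z)^{1/(1-\gamma_r)}\bigr)>\tfrac12$. You instead observe that the logarithmic image $\Sigma$ of the disk $\{|w-1|<1\}$ is convex (since $v\mapsto\log\cos v$ is concave), so the $\mu$--barycentre of $\log(1-z\overline{\zeta})$ stays in $\Sigma$ and hence $\log P(z)\in-(1-\gamma_r)\Sigma$; unwinding this gives exactly the same inequality in the equivalent form $|P(z)|>\bigl(2\cos\frac{\arg P(z)}{1-\gamma_r}\bigr)^{-(1-\gamma_r)}$ together with $|\arg P(z)|<\frac{(1-\gamma_r)\pi}{2}$. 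Your route is more self--contained (no Marx--Strohh\"acker citation), while the paper's is shorter once that theorem is granted. After this step the two optimisations are reparametrisations of one another: the paper's substitution $P^{1/(1-\gamma_r)}=\tfrac12+it$ corresponds to your $\phi=\arctan(2t)$, and both arrive at the critical value $\phi^{*}=-\theta/\gamma_r$ and the same $\kappa(r)$. Your closing remark about the degeneracy when $|\arg(1+rq)|\ge\gamma_r\pi/2$ is apt; the paper's computation implicitly makes the same assumption.
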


\begin{proof}
Since the function $G_r$ is starlike of order $\alpha_r$ by Theorem~\ref{th-r-star}, the function $(1+rq)G_r$ is a normalized starlike function of the same order,  and hence by Proposition~\ref{prop-present} it admits the integral representation
\[
(1+rq)G_r(z)=z \exp\left[ -2(1-\alpha_r)\oint_{\partial\D} \log\left( 1-z\overline{\zeta}\right)d\mu_r(\zeta) \right]
\]
  with some probability measure $\mu_r$ on the unit circle.
  Therefore, by the same Proposition~\ref{prop-present}, the function $z\left(\frac{(1+rq)G_r(z)}z\right)^{\frac1{2(1-\alpha_r)}} $  is starlike of order~$\frac12$\,. Since $2(1-\alpha_r)=1-\gamma_r$, the Marx--Strohh\"acker theorem (see, for example, \cite[Theorem~2.6a]{M-M}) implies that \begin{equation}\label{aux1}
  \Re \left(\frac{(1+rq)G_r(z)}z\right)^{\frac1{1-\gamma_r}}>\frac12\,.
  \end{equation}

According to Theorem~\ref{thm_kappa}, to prove assertion (i), we have to show that $ \Re \frac{G_r(z)}{z} > \kappa(r).$
Let us denote for short $w=\frac{G_r(z)}z$ and $B(r)=1-\gamma_r$.
Our aim is to minimize $\Re w$ under the condition $\Re \left((1+rq)w\right)^{\frac1{B(r)}} = \frac12\,,$ see \eqref{aux1}. 
In other words, we have to minimize the function $\zeta(t):=\Re\frac{\left(\frac12+it\right)^{B(r)}}{1+rq}\,.$ 
Equating $\zeta'(t)$ to zero, we get $\arg \left(\frac12+it\right)^{1-B(r)} = \arg(1+r\overline q)$ at the minimal point of $\zeta$, 
or equivalently, $\frac12+it = M(1+r\overline q)^{\frac1{1-B(r)}}$ for some $M>0.$ This leads to $\frac12+it =\frac{(1+r\overline q)^{\frac1{1-B(r)}}}{2\Re (1+rq)^{\frac1{1-B(r)}}}\,. $ Thus,
\begin{eqnarray*}
  \min_{t\in\R} \zeta(t) &=& \Re\frac{(1+r\overline q)^{\frac{B(r)}{1-B(r)}}} {(1+rq)2^{B(r)} \left(\Re(1+rq)^{\frac1{1-B(r)}} \right)^{B(r)}} \\
    &=&  \frac{\left( \Re(1+rq)^\frac1{1-B(r)} \right)^{1-B(r)}} {2^{B(r)}|1+rq|^2}=\kappa(r),
\end{eqnarray*}
and we are done.

To prove assertion (ii), we use the same notations. We now estimate the values of $\arg w$, or, in other words, the values of the function 
$\xi(t):=\arg\frac{\left(\frac12+it\right)^{B(r)}}{1+rq} = B(r)\arg\left(\frac12+it\right) - \arg(1+rq).$ Obviously, 
$$\xi(t)\in \left(-\frac\pi2 B(r)-\arg(1+rq),\frac\pi2 B(r)-\arg(1+rq) \right).$$
Applying Theorem~\ref{thm-analyt}, we complete the proof.
 \end{proof}

It is easy to see that the squeezing ratio $\kappa(r)$ presented in Theorem~\ref{thm-estim} satisfies the condition $|1+rq|\kappa(r)\to \frac{\Re q}{|q|}$ as $r\to\infty$. 
Moreover, in the case of real $q$  we have $\kappa(r) =\displaystyle\frac{1} {2^{1-\gamma_r}(1+rq)}\,$. Since $\gamma_r\to1$ as $r\to\infty,$ we conclude that assertion (i) 
of Theorem~\ref{thm-estim} is a strong improvement of the result in \cite{E-S-S}, which states that the semigroup, generated by $G_r$ converges to zero uniformly on~$\D$ 
with squeezing ratio $\kappa (r)=1/[2(1+rq)] $.

Further, similarly to Remark \ref{rem-1}, note that $\gamma_r>\frac{r\Re q-r_0}{12-r_0+r\Re q}>\frac{r\Re q-6}{r\Re q +6}\,.$ Therefore, we arrive at

\begin{corol}\label{ass6-sg-exten}
  The  semigroup generated by $G_r,$  $r\ge\frac6{\Re q}$, admits analytic extension with respect to the semigroup parameter to the sector of opening $\pi\,\frac{r\Re q-6}{r\Re q+6}.$
\end{corol}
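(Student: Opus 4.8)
The plan is to obtain the corollary directly from part~(ii) of Theorem~\ref{thm-estim}, replacing the sharp half-opening $\frac{\pi\gamma_r}{2}$ by the more transparent lower bound $\frac{\pi}{2}\cdot\frac{r\Re q-6}{r\Re q+6}$. First I would observe that the hypothesis $r\ge\frac6{\Re q}$ means $r\Re q\ge6>r_0$, so that $A(r\Re q)\in(0,1)$ and hence $\gamma_r=\frac{1-A(r\Re q)}{1+A(r\Re q)}\in(0,1)$; in particular Theorem~\ref{thm-estim}(ii) applies and yields, for every $z\in\D$, an analytic extension of $u(\cdot,z)$ to the sector $\{t\in\C:|\arg t-\arg(1+rq)|<\frac{\pi\gamma_r}{2}\}$, which has angular opening $\pi\gamma_r$. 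Since the restriction of that extension to any subsector is again an analytic extension, the corollary follows once we check the numerical inequality $\gamma_r\ge\frac{r\Re q-6}{r\Re q+6}$.

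To verify this I would argue as in Remark~\ref{rem-1}. Using the bound $A(r\Re q)<\frac{6}{6-r_0+r\Re q}$ (valid for $r\Re q\ge6$) together with the fact that $t\mapsto\frac{1-t}{1+t}$ is strictly decreasing on $(0,1)$, one gets
\[
\gamma_r=\frac{1-A(r\Re q)}{1+A(r\Re q)}>\frac{1-\frac{6}{6-r_0+r\Re q}}{1+\frac{6}{6-r_0+r\Re q}}=\frac{r\Re q-r_0}{12-r_0+r\Re q}\,.
\]
A single cross-multiplication (all denominators being positive since $r\Re q>r_0$) then gives $\frac{r\Re q-r_0}{12-r_0+r\Re q}>\frac{r\Re q-6}{r\Re q+6}$, the difference of the two cross products collapsing to the constant $72-12r_0$, which is positive because $r_0\approx5.924<6$. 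Concatenating these two estimates produces $\gamma_r>\frac{r\Re q-6}{r\Re q+6}$, so the semigroup extends analytically to a sector of opening $\pi\,\frac{r\Re q-6}{r\Re q+6}$, as asserted.

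I do not expect any genuine obstacle here: the corollary is a quantitative, reader-friendly restatement of Theorem~\ref{thm-estim}(ii), and all of the analytic content---the order of starlikeness of $G_r$, the Marx--Strohh\"acker step, and the appeal to Theorem~\ref{thm-analyt}---is already contained in the proof of that theorem. The only points requiring mild attention are the bookkeeping ones: confirming that $r\Re q\ge6$ forces $r\Re q>r_0$ so that $\gamma_r$ is well defined and lies in $(0,1)$, and keeping the inequality directions straight when passing from the bound on $A(r\Re q)$ to the bound on $\gamma_r$ and then to the claimed opening.
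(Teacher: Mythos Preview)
Your proposal is correct and follows essentially the same route as the paper: the corollary is derived from Theorem~\ref{thm-estim}(ii) together with the chain of inequalities $\gamma_r>\dfrac{r\Re q-r_0}{12-r_0+r\Re q}>\dfrac{r\Re q-6}{r\Re q+6}$ noted (via Remark~\ref{rem-1}) in the text immediately preceding the corollary. You simply spell out the cross-multiplication and the monotonicity of $t\mapsto\frac{1-t}{1+t}$ that the paper leaves implicit.
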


\bigskip

\section{Concluding observations}\label{sec-discuss}
\setcounter{equation}{0}

1. All the results in this paper as well as those in the preceding work \cite{E-S-S} have been obtained under the assumption $f(0)=0$.

Let $f$ be a generator and $0\neq\tau\in\D$ be its null point. This point is the Denjoy--Wolff point for the semigroup  $\left\{u(t,\cdot)\right\} _{t\geq 0}$ 
generated by $f$ (under the condition that at least one of the semigroup elements is not an automorphism)  as well as for the resolvent family $\{G_r\}_{r>0}$, see \eqref{D-Wp}. One can conjugate the semigroup with the involution $m_\tau$ which maps $\tau$ to zero. The Denjoy--Wolff point of the semigroup 
$\left\{m_\tau\circ u(t,\cdot)\circ m_\tau \right\}_{t\ge0 }$ is zero. It is also possible to get an appropriate transformation for $f$ (see, for example, \cite{E-S-book}).

Unfortunately, we are not aware of any explicit transformation for nonlinear resolvents which enables one to move their Denjoy--Wolff point to zero. 
Because of this reason, all questions considered here are open when $0\neq\tau\in\D$.

Geometric properties of resolvents in the case where $\tau\in\partial\D$ have not been studied yet.

\medskip

2. It seems that our distortion and covering results (Theorems~\ref{th_posi1}--\ref{th_resol1}) are not sharp. Indeed, we do not know any example that shows their sharpness.

Observe that, by their definition, nonlinear resolvents are inverse functions. Therefore, we use the version of the inverse function theorem presented in \cite{E-S-2020a} 
to prove the distortion result. It may happen that a different method will allow one to improve our quantitative statement regarding distortion.

As for the covering theorem, it is based on the known covering result for hyperbolically convex functions. 
Since not every hyperbolically convex function is a resolvent, we expect that resolvents cover a disk of a radius larger than the one proven above.

\medskip

3.  We note that Questions 3--4 in Section~\ref{sect-intro} concerning  the {\it sharp} orders of starlikeness and spirallikeness are still open. 
Indeed, our approach includes a certain estimate of the range of the function $\frac{zp'(z)}{1+rp(z)}$ restricted to the disk of radius $\frac{3}{1+r\Re q}$. 
This estimation can be refined. Unfortunately, the method for obtaining such refinements we are aware of leads to very artificial formulas, absolutely `non-readable'. 
The problem, therefore, is to establish better results that will be appropriate for the subsequent use.

\medskip

4. Corollary~\ref{cor-tends_to_z} asserts that the net of normalized resolvents $\{(1+rq)G_r\}_{r>0}$ converges to the identity mapping  as $r\to \infty$, 
uniformly on compact subsets of~$\D$. At the same time, the following question is still open.
\begin{quest}
Does the net of normalized resolvents converge to the identity mapping uniformly on the whole open unit disk?
\end{quest}

If the answer to this question is affirmative, this immediately implies the result of Corollary~\ref{corr-boundary-fix}.

\medskip

5. Notice that the semigroup generated by $G_0=\Id$ is defined by $u(t,z)=e^{-t}z$ and can be analytically extended to the right half-plane with respect to the semigroup parameter. 
We have proved that for every $r\ge \frac{r_0}{\Re q}$, the semigroup generated by $G_r$ can be analytically extended to the sector of opening $\pi\gamma_r,$ see Theorem~\ref{thm-estim}. 
Keeping in mind that $\gamma_r$ tends to $1$ as $r\to0^+$, we conjecture:
\begin{conj}
  For every $r>0$, the semigroup generated by $G_r$ can be analytically extended to the sector $\displaystyle \left\{t\in\C: \left|\arg t - \arg(1+rq)\right| < \frac{ \pi\gamma_r}{ 2}\right\}.$
\end{conj}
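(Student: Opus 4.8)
\medskip\noindent\emph{Towards a proof.}\quad The plan is to run, for arbitrary $r>0$, the argument used in the proof of Theorem~\ref{thm-estim}(ii). Since $G_r$ is itself a generator (see \cite{E-S-S}) with $G_r(0)=0$, the Berkson--Porta representation~\eqref{B-P-repres} gives $G_r(z)=z\widetilde p(z)$ with $\Re\widetilde p\ge0$, and by Theorem~\ref{thm-analyt} the semigroup generated by $G_r$ extends analytically to the conjectured sector once one establishes
\[
\left|\arg\!\left((1+rq)\frac{G_r(z)}{z}\right)\right|<\frac{\pi}{2}\,(1-\gamma_r),\qquad z\in\D.
\]
By Proposition~\ref{prop-present}, together with the identity $2(1-\alpha_r)=1-\gamma_r$ used in the proof of Theorem~\ref{thm-estim}, this bound is a consequence of $(1+rq)G_r$ being a normalized starlike function of order $\alpha_r=\frac{1}{1+A(r\Re q)}$: the integral representation then gives $\arg\bigl((1+rq)G_r(z)/z\bigr)=-2(1-\alpha_r)\oint_{\partial\D}\arg(1-z\overline{\zeta})\,d\mu_r(\zeta)$, and $|\arg(1-z\overline{\zeta})|\le\arcsin|z|<\frac{\pi}{2}$ for $z\in\D$ yields the displayed inequality (strictly). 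So the conjecture reduces to extending Corollary~\ref{corr-order-sp-st} (equivalently, Theorem~\ref{th-r-star}) from the range $r\Re q>r_0$ to \emph{all} $r>0$; as a byproduct this would also supply, for moderate $r$, a meaningful value $\gamma_r=2\alpha_r-1\in(0,1)$ in place of the formula, since there $A(r\Re q)$ need not lie in $(0,1)$.

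\medskip\noindent
Re-examining the proof of Theorem~\ref{th-r-star}, the hypothesis $r\Re q>r_0$ is used at exactly one step: the distortion bound $|G_r(w)|\le\frac{3}{1+r\Re q}$ from Theorem~\ref{th_resol1}(b), which is exploited to estimate $A_r(z,\zeta)\le A(r\Re q)$ and which is non-vacuous only when $r\Re q>2$. Hence the concrete sub-goal is a \emph{global} distortion bound $|G_r(z)|\le\rho(r)<1$ valid for every $z\in\D$ and every $r>0$, with $\rho(r)\to0$ as $r\to0^+$ (to match $\gamma_r\to1$). Granting such a $\rho(r)$, one could finish by soft arguments: by the resolvent equation~\eqref{G*-2}, $\frac{z}{G_r(z)}=1+rp(G_r(z))$ with $\Re p\ge0$, so the Riesz--Herglotz estimate for $p$ on $D_{\rho(r)}$ confines $p(G_r(\D))$ to a disk inside the right half-plane containing $q$; consequently the M\"obius map $\zeta\mapsto\frac{1+rq}{1+r\zeta}$ carries $(1+rq)G_r(z)/z$ into an explicit disk through $1$, and it remains only to check that its angular spread about the origin is below $\frac{\pi}{2}(1-\gamma_r)$. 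For $r\Re q>r_0$ this is precisely the paper's argument with $\rho(r)=\frac{3}{1+r\Re q}$.

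\medskip\noindent
The main obstacle is that no such $r$-uniform distortion bound is presently available, and the obvious substitutes are too weak. The Schwarz lemma gives only $|G_r(z)|\le|z|$, hence $p(G_r(D_\rho))\subseteq p(D_\rho)$, and letting $\rho\to1$ recovers nothing better than the trivial $|\arg((1+rq)G_r(z)/z)|<\frac{\pi}{2}$. Moreover, if $f$ has a boundary null point $\zeta$ with $r<1/|f'(\zeta)|$, then $\zeta$ is a boundary fixed point of $G_r$, so $|G_r|$ genuinely approaches $1$ and no constant $\rho(r)<1$ exists; one is then forced to use that $p\circ G_r\to0$ near such points, so that $\arg\bigl((1+rq)G_r(z)/z\bigr)\to\arg(1+rq)$ there --- harmlessly, inside the target sector --- which calls for a global rather than a modulus-uniform argument. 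I would therefore expect a proof to require either a genuinely new, $r$-uniform distortion/covering theorem for resolvents (the paper itself remarks in Section~\ref{sec-discuss} that Theorems~\ref{th_posi1}--\ref{th_resol1} are presumably not sharp), or else a continuation argument: proving that $\{\,r>0:\text{the conjectured sector is attained}\,\}$ is relatively open and closed in $(0,\infty)$ and anchoring it at the limit $r\to0^+$, where $G_r\to\Id$ locally uniformly and the generated semigroup is $e^{-t}z$. Carrying out either of these is where the real work lies.
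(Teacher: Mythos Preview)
This statement is a \emph{conjecture} in the paper, listed among the concluding observations in Section~\ref{sec-discuss}; the paper does not supply a proof and explicitly presents it as open. There is therefore no proof in the paper to compare your proposal against.

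Your write-up is itself labeled ``Towards a proof'' and does not claim to be complete, so this is consistent. Your diagnosis of the obstruction is accurate and aligns with the paper's own remarks: the argument of Theorem~\ref{thm-estim}(ii) rests on the starlikeness order $\alpha_r$ from Corollary~\ref{corr-order-sp-st}, which in turn depends on the distortion bound $|G_r(w)|\le\frac{3}{1+r\Re q}$ of Theorem~\ref{th_resol1}(b), available only for $r\Re q>2$ and yielding $A(r\Re q)<1$ only for $r\Re q>r_0$. You correctly observe that for small $r$ with a boundary regular null point of $f$ the resolvent $G_r$ has a boundary fixed point, so no uniform bound $|G_r|\le\rho(r)<1$ can hold and a genuinely different argument is required. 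You also rightly flag that for intermediate $r$ the quantity $A(r\Re q)$ need not lie in $(0,1)$ --- indeed the denominator $(1+s)^3-3(5s-1)$ of $A(s)$ vanishes for some $s\in(0,1)$ --- so the formula $\gamma_r=\frac{1-A(r\Re q)}{1+A(r\Re q)}$ is not literally meaningful there, and the conjecture implicitly asks for the ``correct'' $\gamma_r$ in that range.

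In short: there is no gap to point to because neither you nor the paper claims a proof; your outline of what would be needed is sound and matches the paper's discussion of why Theorems~\ref{th_posi1}--\ref{th_resol1} are presumably not sharp.
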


\medskip

6. The study of dynamic and geometric properties of nonlinear resolvents in the multidimensional case has just begun.
In particular, in \cite{GHK2020} the authors considered nonlinear resolvents of generators on the open unit ball in $\C^n$ normalized by $f(0)=0,\ f'(0)=\Id$, and proved that:

$\bullet$ the family $\{G_r\}_{r>0}$ is an inverse L{\oe}wner chain;

$\bullet$ if $n = 2$, then the shearing of $(1 +r)G_r$ is quasi-convex of type A and also starlike of order $\frac45$.

$\bullet$ a sufficient condition for the nonlinear resolvents to admit a quasiconformal extension to $\C^n$ was obtained.

These results refer to multidimensional counterparts of Questions 2--4 in Section~\ref{sect-intro}, while the study of multivariate versions of
the other questions posed above is expected to be the matter of forthcoming research.

\medskip

\noindent{\bf Acknowledgment.} The authors are grateful to Guy Katriel and Elijah Liflyand for very helpful comments and discussions. The authors also thank the anonymous reviewer for valuable remarks.

\medskip

\begin{center}
  {\bf Declarations }\vspace{1mm}
\end{center}

\noindent{\bf Conflict of interest}  The authors declare that they have no conflict of interest. 

\noindent{\bf Data availability} This manuscript has no associated data.

\noindent{\bf Ethical Conduct} Not applicable.

\noindent{\bf Financial interests} The authors have no relevant financial or non-financial interests to disclose.

\bigskip

\end{document}